\title[Dorronsoro's theorem]{An integralgeometric approach to \\ Dorronsoro estimates}
\author{Tuomas Orponen}
\keywords{Quantitative differentiation, Lipschitz functions, $\beta$ numbers}
\address{University of Helsinki, Department of Mathematics and Statistics}
\subjclass[2010]{26B05 (Primary) 42B35 (Secondary)}
\thanks{T.O. is supported by the Academy of Finland via the project \emph{Quantitative rectifiability in Euclidean and non-Euclidean spaces}, grant No. 309365.}
\email{tuomas.orponen@helsinki.fi}
\newcommand{\R}{\mathbb{R}}
\newcommand{\Z}{\mathbb{Z}}
\newcommand{\calL}{\mathcal{L}}
\newcommand{\calD}{\mathcal{D}}
\newcommand{\calH}{\mathcal{H}}
\newcommand{\E}{\mathbb{E}}
\newcommand{\calA}{\mathcal{A}}
\newcommand{\spa}{\operatorname{span}}
\newcommand{\diam}{\operatorname{diam}}
\newcommand{\card}{\operatorname{card}}
\newcommand{\calV}{\mathcal{V}}
\newcommand{\osc}{\operatorname{osc}}
\numberwithin{equation}{section}
\theoremstyle{plain}
\newtheorem{thm}[equation]{Theorem}
\newtheorem{lemma}[equation]{Lemma}
\newtheorem{ex}[equation]{Example}
\newtheorem{cor}[equation]{Corollary}
\newtheorem{proposition}[equation]{Proposition}
\theoremstyle{definition}
\newtheorem{definition}[equation]{Definition}
\theoremstyle{remark}
\newtheorem{remark}[equation]{Remark}
\newcommand{\nref}[1]{(\hyperref[#1]{#1})}
\begin{document}

\maketitle

\begin{abstract} A theorem of Dorronsoro from 1985 quantifies the fact that a Lipschitz function $f \colon \R^{n} \to \R$ can be approximated by affine functions almost everywhere, and at sufficiently small scales. This paper contains a new, purely geometric, proof of Dorronsoro's theorem. In brief, it reduces the problem in $\R^{n}$ to a problem in $\R^{n - 1}$ via integralgeometric considerations. For the case $n = 1$, a short geometric proof already exists in the literature. 

A similar proof technique applies to parabolic Lipschitz functions $f \colon \R^{n - 1} \times \R \to \R$. A natural Dorronsoro estimate in this class is known, due to Hofmann. The method presented here allows one to reduce the parabolic problem to the Euclidean one, and to obtain an elementary proof also in this setting. As a corollary, I deduce an analogue of Rademacher's theorem for parabolic Lipschitz functions. \end{abstract} 

\tableofcontents

\section{Introduction} Let $f \colon \R^{n} \to \R$ be a Lipschitz function. By Rademacher's theorem, $f$ is differentiable Lebesgue almost everywhere. In particular, $f$ is approximated by affine maps on sufficiently small neighbourhoods of almost every point in $\R^{n}$. Quantifying this statement is a classical problem. Consider the following coefficients:
\begin{displaymath} \beta_{p}(Q) := \inf_{A} \left[\frac{1}{\diam(Q)^{n}} \int_{Q} \left(\frac{|f - A|}{\diam(Q)} \right)^{p} \, d\calL^{n} \right]^{1/p}, \end{displaymath}  
where $Q \subset \R^{n}$ is a bounded set, $1 \leq p < \infty$, and $\inf$ runs over all affine maps $A \colon \R^{n} \to \R$. For $p = \infty$, define
\begin{displaymath} \beta_{\infty}(Q) := \inf_{A} \sup_{x \in Q} \frac{|f(x) - A(x)|}{\diam(Q)}. \end{displaymath}
Assuming that $f$ is $L$-Lipschitz, one has $\beta_{p}(Q) \lesssim L$ for any bounded set $Q \subset \R^{n}$, and $1 \leq p \leq \infty$. Moreover, Rademacher's theorem implies that $\beta_{p}(B(x,r)) \to 0$ as $r \to 0$ for Lebesgue almost every $x \in \R^{n}$. In the 1985, Dorronsoro \cite[Theorem 2]{MR796440} quantified this corollary of Rademacher's theorem in the following way:
\begin{thm}[Dorronsoro's theorem]\label{main} Let $f \colon \R^{n} \to \R$ be an $L$-Lipschitz function. Then, for $1 \leq p < 2n/(n - 2)$ (and $p = \infty$ for $n = 1$), the following holds for any $C \geq 1$:
\begin{displaymath} \mathop{\sum_{Q \in \calD}}_{Q \subset Q_{0}} \beta_{p}(CQ)^{2}|Q| \lesssim_{C} L|Q_{0}|, \qquad Q_{0} \in \calD. \end{displaymath} 
Here $\calD$ is the family of standard dyadic cubes on $\R^{n}$, $|Q| = \ell(Q)^{n}$ stands for the Lebesgue measure of $Q$, and $CQ$ is the cube which is concentric with $Q$ and has $\diam(CQ) = C\diam(Q)$. \end{thm}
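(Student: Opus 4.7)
I would argue by induction on the dimension $n$. The base case $n = 1$ is the short geometric proof already in the literature (referenced in the introduction), so I take it as given. For $n \geq 2$, assume Theorem~\ref{main} in $\R^{n-1}$. The idea is to slice $f$ by the family of affine hyperplanes $H_{v,t} := \{x \in \R^n : v \cdot x = t\}$ parametrised by $(v,t) \in S^{n-1} \times \R$, apply the inductive hypothesis to each restriction $f_{v,t} := f|_{H_{v,t}}$, and then average over $(v,t)$ using integral geometry. Every such $f_{v,t}$ is an $L$-Lipschitz function on $H_{v,t} \cong \R^{n-1}$, so it is a legitimate input for the induction hypothesis.

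The technical core of the induction step is a single integralgeometric inequality of the shape
\[
\mathop{\sum_{Q \in \calD,\; Q \subset Q_0}} \beta_p(CQ)^2 |Q| \;\lesssim\; \int_{S^{n-1}} \int_{\R} \mathop{\sum_{R \in \calD(H_{v,t}),\; R \subset Q_0 \cap H_{v,t}}} \beta_p(f_{v,t}, C'R)^2 |R|_{n-1} \, dt\, d\calH^{n-1}(v),
\]
where $\calD(H_{v,t})$ is a reasonable dyadic system on $H_{v,t}$, the constants $C, C'$ are suitable, and $\beta_p(f_{v,t}, \cdot)$ denotes the $\beta$-number of the restricted function $f_{v,t}$. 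Once such an inequality is in place, inserting the inductive hypothesis inside the brackets and using a Crofton-type identity $\int_{\R} |Q_0 \cap H_{v,t}|_{n-1}\, dt \lesssim |Q_0|$, followed by averaging over $v \in S^{n-1}$, produces the desired bound $\lesssim L |Q_0|$ on the full sum.

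The main obstacle is this slicing inequality. The geometric intuition is clear: for a dyadic cube $Q$ and a hyperplane $H_{v,t}$ meeting $Q$ transversally, $Q \cap H_{v,t}$ contains an $(n-1)$-cube $R_{v,t}$ of comparable diameter, and the restriction to $H_{v,t}$ of any $\R^n$-affine map is affine on $H_{v,t}$. Hence the best approximation error on $CQ$ restricts to an admissible error on $C'R_{v,t}$, yielding cube by cube a bound of $\beta_p(CQ)^p$ by an $(v,t)$-average of $\beta_p(f_{v,t}, C'R_{v,t})^p$. The delicate points are (i) preserving the \emph{square} power on the left when summing over scales, as opposed to the $p$-th power coming from the slicewise estimate; (ii) making a measurable, scale-compatible choice of assignment $Q \mapsto R_{v,t}$ as $v$ rotates, so that the dyadic bookkeeping closes up against the Crofton identity; and (iii) understanding how the Sobolev-type restriction $p < 2n/(n-2)$ arises. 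I expect that (iii) enters precisely when converting the slicewise $p$-power estimate into a square-power estimate, via a Cauchy--Schwarz step combined with a Sobolev embedding for $f_{v,t}$ on the $(n-1)$-dimensional hyperplane; the exponent $2n/(n-2)$ is exactly the Sobolev conjugate of $2$ in that dimension.
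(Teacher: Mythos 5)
Your high-level plan (induction on dimension, slicing by hyperplanes, Crofton identity) is in the right family of ideas, and the inequality you state for the hyperplane slices is essentially the paper's Lemma~\ref{carleson2}. But there is a genuine gap, and you have misidentified where it lies. The "slicing inequality" you need at the level of a single cube reads, roughly, $\beta_p(CQ) \lesssim $ (an average over $V$ of $\beta_p(CQ,V)$), and this is \emph{not} provable by restriction: restricting the $\R^n$-optimal affine map to a slice shows $\beta_p(CQ, V) \lesssim \beta_p(CQ)$, which is the reverse inequality. To go in the direction you need, you must build a \emph{single} affine $A$ on $CQ$ out of the collection of slicewise minimisers $A_V$, and a priori these $A_V$ have nothing to do with one another as $V$ varies. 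That coherence problem is the actual content of the induction step, not the bookkeeping, measurability, or Sobolev-exponent issues you list in (i)--(iii). (Indeed, your guess about the exponent $2n/(n-2)$ arising from a Sobolev embedding on the hyperplane is not what happens: the slicing argument in the paper only gives the range $1 \le p \le 2$, and the extension to $p < 2n/(n-2)$ is handled separately by the standard reduction to $p=1$ as in Dorronsoro's original paper.)

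The paper resolves the coherence problem with two additional ingredients absent from your proposal. First, it slices along \emph{two} families simultaneously: $(n-1)$-planes (giving $\beta^{n-1}_{2,2}$, proved Carleson by the $D(n-1,2)$ hypothesis, as you anticipate) \emph{and} lines (giving $\beta^{1}_{\infty,2}$, proved Carleson using the $n=1$ case with $p = \infty$); both are needed to control $\beta_2(cQ)$. Second, it establishes a "compatibility lemma" (Lemma~\ref{mainLemma}): using Chebyshev and Fubini, one selects a $\tau$-transversal $(n+1)$-tuple of $(n-1)$-planes $V_1',\dots,V_{n+1}'$ bounding a simplex inside $Q$ such that the associated slicewise minimisers $A_j$ nearly agree at every vertex of the simplex. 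This pins down a global affine $A$ (the one matching $f$ at the vertices), which then approximates $f$ on the boundary of the simplex; the $\beta^{1}_{\infty,2}$ control on line slices is what finally propagates this boundary approximation to the interior of $cQ$. Without something playing the role of Lemma~\ref{mainLemma} and without the second (one-dimensional, $L^\infty$) slicing family, the induction step does not close.
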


This paper contains a new, purely geometric, proof of Theorem \ref{main}. Dorronsoro's theorem has numerous applications in the theory of quantitative rectifiability and singular integrals; here are a few examples. Jones \cite{MR1013815} used the case $n = 1$ to give a proof for the $L^{2}$-boundedness of the Cauchy integral on Lipschitz graphs. A similar approach works in higher dimensions, and for more general singular integrals, as shown by Tolsa \cite{MR2481953}. Dorronsoro's theorem is an important tool in the machinery behind David and Semmes' theory of uniformly rectifiable sets, see \cite[Section 10]{DS1}. Recently, Dorronsoro's theorem was used as a tool in Azzam and Schul's work \cite{MR3770170} on higher dimensional traveling salesman theorems.

\subsection{Existing proof strategies} There are at least three different proofs of Theorem \ref{main} in the literature. Dorronsoro's original argument in \cite{MR796440} is rather indirect. Instead of Lipschitz functions, Dorronsoro formulates his result in terms of functions in $W^{1,p}(\R^{n})$. During the proof, Dorronsoro first establishes an analogue of his result for functions in the fractional order Sobolev spaces $W^{\alpha,p}(\R^{n})$, for $0 < \alpha < 1$ and $1 < \alpha < 2$, and finally derives the case $\alpha = 1$ (the most relevant case for geometric applications) by complex interpolation. This is not an elementary proof, but it also gives much more information than Theorem \ref{main} (which is only a special case of \cite[Theorem 2]{MR796440}).

A much shorter proof is outlined in the appendix of Azzam's paper \cite{MR3512428}, see in particular \cite[p. 645]{MR3512428}. This proof avoids interpolation, but is crucially based on the Fourier transform converting differentiation into multiplication. It is not clear -- at least to the author -- who this approach should be attributed to. Azzam states that the proof is well-known, and that pieces of it can be found in the lectures \cite{MR1104656} of Christ, and unpublished lecture notes of David. 

Most recently, Hyt\"onen and Naor \cite[Theorem 5]{HN} found another proof which avoids complex interpolation. It is based on basic properties of the heat semigroup, Littlewood-Paley theory, martingale arguments, and Rota's representation theorem. The proof of Hyt\"onen and Naor is longer than Azzam's, but perhaps lighter than Dorronsoro's original: it does not require developing the theory of fractional order Sobolev spaces. 

\subsection{Motivation for a geometric proof}\label{motivation} There is one more proof of Dorronsoro's theorem, which was not mentioned above: when $n = 1$, one can infer Theorem \ref{main} from Jones' traveling salesman theorem \cite{MR1069238} applied to the the graph $\Gamma_{f} \subset \R^{2}$ of the Lipschitz function $f \colon \R \to \R$. In addition to Jones' original complex-analytic argument, at least two different geometric proofs of the traveling salesman theorem are available, due to Okikiolu \cite{MR1182488} and Bishop and Peres \cite[Chapter 10]{MR3616046}. Further, in the special case of graphs, Okikiolu's argument simplifies very substantially: to the best of my knowledge, the shortest (published) proof of Theorem \ref{main} for $n = 1$ can be found in the book of Garnett and Marshall, \cite[Chapter X, Lemma 2.4]{MR2450237}. 

This approach was omitted from the previous section for the simple reason that it only works when $n = 1$. In this case, however, especially the proof in \cite{MR2450237} is much shorter and more elementary than the methods which work for all $n \geq 1$. In this paper, an equally elementary proof is given for all $n \geq 1$. It does not attempt to generalise any of the geometric arguments from the case $n = 1$; rather, it reduces the problem in $\R^{n}$ to a problem in $\R^{n - 1}$ by integralgeometric considerations. The heaviest tools are Chebyshev's inequality and Fubini's theorem. The case $n = 2$ is particularly simple, and is given separately in Section \ref{quickProof}.

\subsection{Dorronsoro estimates for parabolic Lipschitz functions} The technique developed in the paper can also be used to give an elementary proof of a Dorronsoro estimate for \emph{parabolic Lipschitz functions}. The result is originally due to Hofmann \cite{MR1330241}, but the proof in \cite{MR1330241} is even less elementary than the existing proofs of Dorronsoro's theorem in $\R^{n}$, see Remark \ref{hofmannCitation} for further commentary. To keep the introduction short, I postpone the definition of parabolic Lipschitz functions to Section \ref{parabolicSection}. Once the correct notation has been set up, the main result in the parabolic setting looks exactly the same as Theorem \ref{main} (for $p = 2$), see Theorem \ref{dorronsoro}.

As a quick application, I close the paper by deriving an analogue of Rademacher's theorem for parabolic Lipschitz functions, see Theorem \ref{rademacher}. As far as I know, this result is new.
 
 \subsection{Acknowledgements} I thank Katrin F\"assler for many useful discussions during the preparation of this paper, and in particular for pointing out the references \cite{2018arXiv180507270D, MR1484857}. I am also thankful to Mart\'i Prats for pointing out the reference \cite{MR2450237}. 
 
 \section{Proofs in Euclidean space}

Let $D(n,p)$ be the statement of Theorem \ref{main} for $n \geq 1$ and $p \in [1,\infty]$, and note that $D(n,q)$ is a stronger statement than $D(n,p)$ for $q \geq p$. Most of Dorronsoro's paper \cite{MR796440} is devoted to establishing $D(n,1)$ for all $n \geq 1$, and then $D(n,p)$ for $1 \leq p < 2n/(n - 2)$ is reduced to $D(n,1)$ in \cite[Section 5]{MR796440}. 

The structure of the proof below can be described as follows:
\begin{displaymath} D(1,\infty) + D(n - 1,2) \quad \Longrightarrow \quad D(n,2), \qquad n \geq 2. \end{displaymath}
In particular, assuming $D(1,\infty)$, one obtains $D(n,p)$ for all $n \geq 1$ and $1 \leq p \leq 2$. For $2 < p < 2n/(n - 2)$, one needs to reduce $D(n,p)$ to $D(n,1)$ as in \cite[Section 5]{MR796440}. 

\subsection{Measures on the affine Grassmannian} Let $0 \leq m \leq n$, and let $\calA_{m}$ be the family of all (affine) $m$-planes in $\R^{n}$. For $B \subset \R^{n}$, we write
\begin{displaymath} \calA_{m}(B) := \{V \in \calA_{m} : B \cap V \neq \emptyset\}. \end{displaymath}
We denote by $\eta_{m}$ the unique translation invariant Borel measure on $\calA_{m}$ with the normalisation
\begin{displaymath} \eta_{m}(\calA_{m}(B(1))) = 1. \end{displaymath} 
Here $B(1) := B(0,1)$ is the open unit ball centred at the origin, and translation invariance means that
\begin{displaymath} \eta_{m}(\{V + x : V \in \calV\}) = \eta_{m}(\calV) \end{displaymath}
for all Borel subsets $\calV \subset \calA_{m}$, and $x \in \R^{n}$. In the cases $m = 0$ and $m = n$ one has $\eta_{0} = c_{n}\calL^{n}$ and $\eta_{n} = \delta_{\R^{n}}$. 

In fact,  only the measures $\eta_{1}$ and $\eta_{n - 1}$ will be needed below, and I record a pair of useful representations for them. If $e \in S^{n - 1}$, let $\rho_{e}$ be the orthogonal projection to the line $\ell_{e} := \spa(e)$, and let $\pi_{e}$ be the orthogonal projection to $V_{e} := e^{\perp}$. Then, for some constants $c_{1},c_{n - 1} > 0$, one has
\begin{equation}\label{eta1} \eta_{1}(\calL) = c_{1}\int_{S^{n - 1}} \calH^{n - 1}(\{v \in V_{e} : \pi_{e}^{-1}\{v\} \in \calL\}) \, d\sigma(e), \qquad \calL \subset \calA_{1}, \end{equation}
and
\begin{equation}\label{etaD} \eta_{n - 1}(\calV) = c_{n - 1}\int_{S^{n - 1}} \calH^{1}(\{t \in \ell_{e} : \rho_{e}^{-1}\{t\} \in \calV\}) \, d\sigma(e), \qquad \calV \subset \calA_{n - 1}, \end{equation}
where $\sigma := \calH^{n - 1}|_{S^{n - 1}}$. The reader can either check that \eqref{eta1}-\eqref{etaD} define translation invariant measures on $\calA_{1}$ and $\calA_{n - 1}$. Alternatively, \eqref{eta1}-\eqref{etaD} can be considered as the definitions of $\eta_{1}$ and $\eta_{n - 1}$. 

\subsection{Integralgeometric $\beta$ numbers} Let $f \colon \R^{n} \to \R$ be a continuous function. For 
\begin{displaymath} 0 \leq m \leq d, \quad 1 \leq p < \infty, \quad V \in \calA_{m}, \quad \text{and} \quad Q \subset \R^{n}, \end{displaymath}
we define
\begin{displaymath} \beta_{p}(Q,V) := \inf_{A} \left[\frac{1}{\diam(Q)^{m}} \int_{Q \cap V} \left( \frac{|f - A|}{\diam(Q)} \right)^{p} \, d\calH^{m} \right]^{1/p}. \end{displaymath} 
Here the $\inf$ runs over all affine maps $\R^{n} \to \R$. The reader is supposed to think "cube" when seeing "$Q$", but the definition applies more generally. Note that the index $m$ on the right hand side is implicitly determined by the dimension of the plane $V$. I also define $\beta_{\infty}(Q,V)$ as expected:
\begin{displaymath} \beta_{\infty}(Q,V) := \inf_{A} \sup_{x \in Q \cap V} \frac{|f(x) - A(x)|}{\diam(Q)}. \end{displaymath}
Finally, for $1 \leq p \leq \infty$ and $1 \leq q < \infty$, and a cube $Q \subset \R^{n}$, define the \emph{integralgeometric $\beta$ number}
\begin{equation}\label{alphapq} \beta^{m}_{p,q}(Q) := \left[ \fint_{\calA_{m}(Q)} \beta_{p}(Q,V)^{q} \, d\eta_{m}(V) \right]^{1/q}. \end{equation}
\begin{ex} Since $\calA_{n} = \{\R^{n}\}$ and $\eta_{n} = \delta_{\R^{n}}$, observe that
\begin{displaymath} \beta^{n}_{p,q}(Q) = \beta_{p}(Q,\R^{n}) = \inf_{A} \left[\frac{1}{\diam(Q)^{n}} \int_{Q} \left(\frac{|f - A|}{\diam(Q)} \right)^{p} \, d\calL^{n} \right]^{1/p} = \beta_{p}(Q) \end{displaymath} 
for any $1 \leq q < \infty$, where $\beta_{p}(C_{0}Q)$ was defined in the first section. The number $\beta^{0}_{p,q}(Q)$, on the other hand, is not terribly useful:
\begin{displaymath} \beta^{0}_{p,q}(Q) = \left[ \fint_{Q} \beta_{p}(Q,\{x\})^{q} \, d\calL^{n}(x) \right]^{1/q} = 0 \end{displaymath} 
for any cube $Q \subset \R^{n}$, since evidently $\beta_{p}(Q,\{x\}) = 0$ for all $x \in \R^{n}$. \end{ex}

\subsection{Estimates for the integralgeometric $\beta$ numbers} As already mentioned in the introduction, there is a simple geometric proof available for the case $n = 1$ of Theorem \ref{main}, namely \cite[Chapter X, Lemma 2.4]{MR2450237} (or \cite{MR1069238, MR1182488, MR3616046}). I emphasise that the proof in \cite{MR2450237} gives the case $n = 1$ of Theorem \ref{main} for $p = \infty$; this is crucial below. The following estimate for the numbers $\beta_{\infty,2}^{1}(Q)$ is an easy corollary:
\begin{lemma}\label{carleson1} Let $f \colon \R^{n} \to \R$ be an $L$-Lipschitz function. Then, for any $C \geq 1$,
\begin{displaymath} \mathop{\sum_{Q \in \calD}}_{Q \subset Q_{0}} \beta^{1}_{\infty,2}(CQ)^{2}|Q| \lesssim_{C} L|Q_{0}|, \qquad Q_{0} \in \calD. \end{displaymath}
\end{lemma}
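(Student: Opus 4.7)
The plan is to exploit the integral-geometric representation \eqref{eta1} to reduce the Carleson estimate for $\beta^{1}_{\infty,2}$ to the one-dimensional Dorronsoro estimate $D(1,\infty)$ applied to the restrictions of $f$ to individual lines. Noting that $\eta_{1}(\calA_{1}(CQ)) = c_{1}\int_{S^{n-1}} \calH^{n-1}(\pi_{e}(CQ))\, d\sigma(e) \sim_{C,n} \ell(Q)^{n-1}$, one first rewrites
\begin{displaymath}
  \beta^{1}_{\infty,2}(CQ)^{2}|Q| \,\sim_{C,n}\, \ell(Q) \int_{S^{n-1}} \int_{\pi_{e}(CQ)} \beta_{\infty}\bigl(CQ,\, \pi_{e}^{-1}\{v\}\bigr)^{2}\, d\calH^{n-1}(v)\, d\sigma(e).
\end{displaymath}
Swapping the Carleson sum over $Q \subset Q_{0}$ with the $(e,v)$-integration and observing that only $v \in \pi_{e}((C+1)Q_{0})$ contribute (a set with $\calH^{n-1}$-measure $\lesssim_{C,n} \ell(Q_{0})^{n-1}$), the lemma reduces to the bound
\begin{displaymath}
  S(V) \,:=\, \sum_{\substack{Q \in \calD,\, Q \subset Q_{0} \\ V \cap CQ \neq \emptyset}} \ell(Q)\, \beta_{\infty}(CQ, V)^{2} \,\lesssim_{C,n}\, L\ell(Q_{0}),
\end{displaymath}
uniformly over lines $V = \pi_{e}^{-1}\{v\}$.

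To establish the one-line estimate, I would fix a dyadic filtration $\calD_{V}$ on $V \cong \R$, and to each dyadic cube $Q$ contributing to $S(V)$ associate a dyadic interval $J_{Q} \in \calD_{V}$ with $V \cap CQ \subset J_{Q}$ and $|J_{Q}| \sim_{C,n} \ell(Q)$; such $J_{Q}$ exists because $V \cap CQ$ is an interval of length at most $\diam(CQ) \sim_{C,n} \ell(Q)$. Since the dilated cubes $\{CQ\}$ at any fixed scale are $O_{C,n}(1)$-overlapping in $\R^{n}$, the assignment $Q \mapsto J_{Q}$ is at most $O_{C,n}(1)$-to-one. Using that the affine maps on $V$ are precisely the restrictions of affine maps on $\R^{n}$, monotonicity of the supremum yields the pointwise comparison
\begin{displaymath}
  \beta_{\infty}(CQ, V) \,\leq\, \inf_{a}\sup_{x \in J_{Q}} \frac{|g(x) - a(x)|}{\diam(CQ)} \,=\, \frac{|J_{Q}|}{\diam(CQ)}\,\tilde\beta_{\infty}(J_{Q}) \,\lesssim_{C,n}\, \tilde\beta_{\infty}(J_{Q}),
\end{displaymath}
where $g := f|_{V}$ is $L$-Lipschitz on $V$ and $\tilde\beta_{\infty}(J) := \inf_{a}\sup_{x \in J} |g(x) - a(x)|/|J|$ is the one-dimensional $\beta_{\infty}$ of $g$ on $J$. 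Choosing $J_{0} \in \calD_{V}$ with $|J_{0}| \sim_{C,n} \ell(Q_{0})$ containing $V \cap (C+1)Q_{0}$, combining the pointwise comparison with the $O_{C,n}(1)$-overlap and applying $D(1,\infty)$ to $g$ on $J_{0}$ (via \cite[Ch.~X, Lemma~2.4]{MR2450237}) gives
\begin{displaymath}
  S(V) \,\lesssim_{C,n}\, \sum_{\substack{J \in \calD_{V} \\ J \subset J_{0}}} |J|\,\tilde\beta_{\infty}(J)^{2} \,\lesssim\, L|J_{0}| \,\lesssim_{C,n}\, L\ell(Q_{0}),
\end{displaymath}
which is the required estimate.

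The main obstacle is the pointwise comparison step. When the line $V$ nearly grazes $CQ$, the intersection $V \cap CQ$ can be arbitrarily shorter than $\diam(CQ)$, so identifying $\beta_{\infty}(CQ, V)$ with a one-dimensional $\beta_{\infty}$ on $V \cap CQ$ itself would lead to badly mismatched denominators and no useful inequality. Enlarging $V \cap CQ$ to a dyadic interval $J_{Q}$ of length $\sim_{C,n} \diam(CQ)$ fixes the denominator by monotonicity of the supremum, at the cost of an $O_{C,n}(1)$-overlap among the $J_{Q}$ that is harmless because the Carleson sum on $V$ tolerates bounded overlap.
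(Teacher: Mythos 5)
Your proposal follows exactly the paper's strategy: swap the Carleson sum with the $\eta_{1}$-integral via Fubini, reduce to a single-line estimate, and invoke the $n=1$, $p=\infty$ Dorronsoro theorem for $f|_{\ell}$. The only difference is that you spell out the final step -- the bounded-overlap assignment $Q \mapsto J_{Q}$ mapping cubes of $\R^{n}$ touching $\ell$ to dyadic intervals of $\ell$ -- which the paper leaves implicit in the phrase ``applying the case $n=1$ of Theorem \ref{main} to the restriction of $f$ to $\ell$.''
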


\begin{proof} Start by observing that
\begin{displaymath} \eta_{1}(\calA_{1}(CQ)) \sim_{C} \diam(Q)^{n - 1}, \qquad Q \in \calD, \end{displaymath} 
as follows easily from \eqref{eta1}.  We then fix $Q_{0} \in \calD$ and make the following estimates:
\begin{align*} \mathop{\sum_{Q \in \calD}}_{Q \subset Q_{0}} \beta_{\infty,2}^{1}(CQ)^{2}|Q| & = \mathop{\sum_{Q \in \calD}}_{Q \subset Q_{0}} \left[ \fint_{\calA_{1}(CQ)} \beta_{\infty}(CQ,V)^{2} \, d\eta_{1}(\ell) \right] |Q|\\
& \lesssim_{C} \int_{\calA_{1}(CQ_{0})} \mathop{\sum_{Q \subset Q_{0}}}_{CQ \cap \ell \neq \emptyset} \beta_{\infty}(CQ,\ell)^{2}\diam(Q) \, d\eta_{1}(\ell)\\
& \lesssim_{C} L\int_{\calA_{1}(CQ_{0})} \diam(Q_{0}) \, d\eta_{1}(\ell) \sim_{C} L|Q_{0}|^{n}, \end{align*} 
applying the case $n = 1$ of Theorem \ref{main} to the restriction of $f$ to $\ell$ when passing to the last line. The proof is complete. \end{proof}

A very similar argument gives the following result for the numbers $\beta_{2,2}^{n - 1}(Q)$:

\begin{lemma}\label{carleson2} Let $f \colon \R^{n} \to \R$ be an $L$-Lipschitz function, and assume that Theorem \ref{main} holds in $\R^{n - 1}$. Then, for any $C \geq 1$,
\begin{displaymath} \mathop{\sum_{Q \in \calD}}_{Q \subset Q_{0}} \beta_{2,2}^{n - 1}(CQ)^{2}|Q| \lesssim_{C} L|Q_{0}|, \qquad Q_{0} \in \calD. \end{displaymath} 
\end{lemma}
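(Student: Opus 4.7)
The plan is to mimic the proof of Lemma \ref{carleson1}, with $(n-1)$-planes replacing lines and the assumed $D(n-1,2)$ replacing $D(1,\infty)$. First, from \eqref{etaD} one reads off
\begin{displaymath} \eta_{n-1}(\calA_{n-1}(CQ)) \sim_{C} \diam(Q), \qquad Q \in \calD, \end{displaymath}
because the slice $\calH^{1}(\{t \in \ell_{e} : \rho_{e}^{-1}\{t\} \cap CQ \neq \emptyset\})$ is the length of the projection $\rho_{e}(CQ)$, which is comparable to $C\diam(Q)$ uniformly in $e \in S^{n-1}$. Unwinding the definition of $\beta_{2,2}^{n-1}$ and applying Fubini as in Lemma \ref{carleson1} then reduces the target estimate to showing
\begin{displaymath} S(V) := \mathop{\sum_{Q \subset Q_{0}}}_{CQ \cap V \neq \emptyset} \beta_{2}(CQ, V)^{2}\,\diam(Q)^{n-1} \lesssim_{C} L\,\diam(Q_{0})^{n-1} \end{displaymath}
for each fixed $V \in \calA_{n-1}(CQ_{0})$: the outer integration contributes one further factor $\eta_{n-1}(\calA_{n-1}(CQ_{0})) \sim_{C} \diam(Q_{0})$ and produces the desired right-hand side $L\,\diam(Q_{0})^{n} \sim L|Q_{0}|$.

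For fixed $V$, the restriction $f|_{V}$ is $L$-Lipschitz on the Euclidean $(n-1)$-plane $V$, and $\beta_{2}(CQ, V)$ is exactly the planar $\beta_{2}$ number (with respect to $f|_{V}$) of the slice $CQ \cap V \subset V$. The step I expect to require the most care is a \emph{gridding} argument which converts the collection $\{CQ \cap V : Q \in \calD\}$ into a standard dyadic grid $\calD_{V}$ on $V$. To each $Q$ with $CQ \cap V \neq \emptyset$ I would associate a cube $R_{Q} \in \calD_{V}$ of side $\sim \diam(Q)$ with $CQ \cap V \subset C'R_{Q}$ for a suitable $C' = C'(C)$. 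Monotonicity of $\beta_{2}$ under inclusion of comparable-diameter sets (a direct consequence of the definition) then gives
\begin{displaymath} \beta_{2}(CQ, V)^{2}\,\diam(Q)^{n-1} \lesssim_{C} \beta_{2}(C'R_{Q}, V)^{2}\,|R_{Q}|_{V}, \end{displaymath}
where $|\cdot|_{V}$ denotes Lebesgue measure on $V$, while a short lattice-counting argument shows that $Q \mapsto R_{Q}$ is boundedly many-to-one (with bound depending on $C$).

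Grouping the terms of $S(V)$ by $R_{Q}$ and invoking $D(n-1, 2)$ applied to $f|_{V}$ on $V$, summed over dyadic cubes $R \in \calD_{V}$ inside a fixed ancestor of side $\sim_{C} \diam(Q_{0})$ containing $CQ_{0} \cap V$, then yields the bound $S(V) \lesssim_{C} L\,\diam(Q_{0})^{n-1}$. Integrating in $V$ over $\calA_{n-1}(CQ_{0})$ and using the first displayed estimate with $Q_{0}$ in place of $Q$ finishes the proof. The main obstacle is the gridding step; everything else is essentially parallel to Lemma \ref{carleson1}.
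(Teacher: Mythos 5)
Your proof takes essentially the same route as the paper's: Fubini to swap the sum over $Q$ with the $\eta_{n-1}$-integral, then invoke the inductive hypothesis $D(n-1,2)$ on $f|_{V}$ for each fixed $V$. The gridding argument you spell out is precisely the routine bookkeeping that the paper compresses into the phrase ``applying Theorem \ref{main} to the restriction of $f$ to $V$,'' so the two proofs agree in substance.
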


\begin{proof} This time, deduce from \eqref{etaD} that
\begin{displaymath} \eta_{n - 1}(\calA_{n - 1}(CQ)) \sim_{C} \diam(Q), \qquad Q \in \calD. \end{displaymath}
Then, fix $Q_{0} \in \calD$, and estimate as follows:
\begin{align*} \mathop{\sum_{Q \in \calD}}_{Q \subset Q_{0}} \beta_{2,2}^{n - 1}(CQ)^{2}|Q| & = \mathop{\sum_{Q \in \calD}}_{Q \subset Q_{0}} \left[ \fint_{\calA_{n - 1}(CQ)} \beta_{2}(CQ,V)^{2} \, d\eta_{n - 1}(V) \right] |Q|\\
& \lesssim_{C} \int_{\calA_{n - 1}(CQ_{0})} \mathop{\sum_{Q \subset Q_{0}}}_{CQ \cap V \neq \emptyset} \beta_{2}(CQ,V)^{2}\diam(Q)^{n - 1} \, d\eta_{n - 1}(V)\\
& \lesssim_{C} L\int_{\calA_{n - 1}(CQ_{0})} \diam(Q)^{n - 1} \, d\eta_{n - 1}(V) \sim_{C} L|Q_{0}|, \end{align*} 
applying Theorem \ref{main} to the restriction of $f$ to $V$ when passing to the last line. The proof is complete. \end{proof}

\subsection{Proof of the main theorem} Assume inductively that $n \geq 2$, and Theorem \ref{main} for $p = 2$ is true in dimension $n - 1$. Fix an $L$-Lipschitz function $f \colon \R^{n} \to \R$. All the $\beta$-coefficients appearing below will be defined relative to this function $f$. Possible dependence on the ambient dimension "$n$" will be suppressed in the $\lesssim$ notation. With Lemmas \ref{carleson1} and \ref{carleson2} in mind, fix $C \geq 1$ and set
\begin{equation}\label{form13} \beta(Q) := \beta_{2,2}^{n - 1}(Q)^{2} + \beta_{\infty,2}^{1}(Q) \end{equation} 
for any cube $Q \subset \R^{d}$. Then, the lemmas imply that
\begin{displaymath} \mathop{\sum_{Q \in \calD}}_{Q \subset Q_{0}} \beta(CQ)^{2}|Q| \lesssim L|Q_{0}|, \qquad Q_{0} \in \calD. \end{displaymath} 
So, Theorem \ref{main} will immediately follow from the estimate
\begin{equation}\label{form1} \beta_{2}(cQ) \lesssim \beta(Q) \end{equation} 
for any cube $Q \subset \R^{n}$, where $c > 0$ is a small constant, depending on $n$. For technical reasons, the proof will literally show that $\beta_{2}(cQ) \lesssim \beta(CQ)$, where $0 < 1 \ll 1 \ll C$ are constants depending on $n$, but this clearly implies \eqref{form1}. Applying scalings and translations, it is also easy to see that it is sufficient to prove \eqref{form1} for the cube $Q = [0,1]^{n}$. This notation will be adopted for the rest of the proof (of \eqref{form1} and Theorem \ref{main}). 

If $\beta(Q)$ is small, then $f$ is close to an affine map $A_{V}$ on $Q \cap V$ for most planes $V \in \calA_{n - 1}(Q)$. The main problem in proving \eqref{form1} is that the maps $A_{V}$ associated to different $V \in \calA_{n - 1}(Q)$ may \emph{a priori} have nothing to do with each other: to prove \eqref{form1}, one needs to construct a "global" affine map $A$ approximating $f$ well inside $cQ$. The only candidates available, however, are the maps $A_{V}$. Lemma \ref{mainLemma} below will establish a weak "compatibility" condition for a (randomly selected) $(n + 1)$-tuple $A_{V_{1}},\ldots,A_{V_{n + 1}}$, which will eventually allow the construction of $A$ in Section \ref{conclusion}.

\subsubsection{Quick proof in the plane}\label{quickProof} The proof of \eqref{form1} is very simple in the case $n = 2$, so I sketch it separately, before giving the general details. This discussion is not needed later, so the reader can also skip ahead until Definition \ref{transDef}.

\begin{figure}[h!]
\begin{center}
\includegraphics[scale = 0.9]{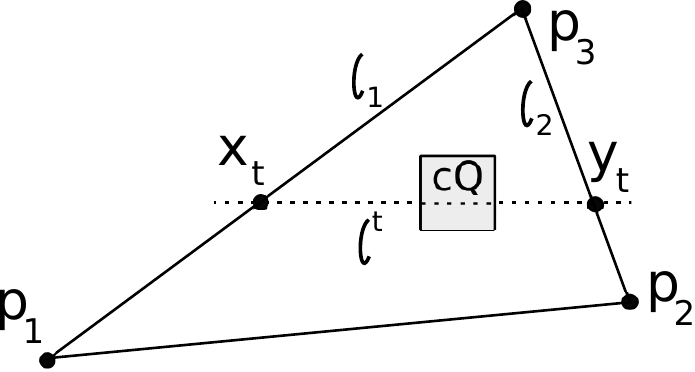}
\caption{The proof in the case $n = 2$.}\label{fig1}
\end{center}
\end{figure}

Choose three lines $\ell_{1},\ell_{2},\ell_{3} \in \calA_{1}(Q)$ at random. Then, if $c > 0$ is sufficiently small, the following things happen (simultaneously) with positive probability:
\begin{itemize} 
\item[(i)] $5cQ \subset \bigtriangleup \subset Q$, where $\bigtriangleup$ is the triangle bounded by $\ell_{1},\ell_{2},\ell_{3}$, see Figure \ref{fig1}.
\item[(ii)] $\beta_{\infty}(Q,\ell_{j}) \lesssim \beta_{\infty,2}^{1}(Q) \leq \beta(Q)$ for $1 \leq j \leq 3$.
\end{itemize} 
Now, let $A_{j}$, $1 \leq j \leq 3$, be an affine map minimising $\beta_{\infty}(Q,\ell_{j}$). Further, let $A$ be the unique affine map which coincides with $f$ at the three corners $p_{1},p_{2},p_{3}$ of $\bigtriangleup$. By (i), these corners are contained in $Q$ and their separation is $\sim_{c} 1$. Further, by (ii),
\begin{equation}\label{form22} |A(p_{i}) - A_{j}(p_{i})| = |f(p_{i}) - A_{j}(p_{i})| \lesssim \beta_{\infty}(Q,\ell_{j}) \lesssim \beta(Q) \end{equation}
whenever $\ell_{j}$ contains $p_{i}$. This implies that $A_{j}$ coincides, up to an error of $\beta(Q)$ with $A$ on (at least) two of the corners of $\bigtriangleup$. It follows that 
\begin{equation}\label{form20} \|A - A_{j}\|_{L^{\infty}(\ell_{j} \cap Q)} \lesssim \beta(Q), \qquad 1 \leq j \leq 3. \end{equation}
To wrap up the proof, write $\ell^{t} := \{(x,y) : y = t\}$. Without loss of generality, assume that 
\begin{equation}\label{form21} \int_{0}^{1} \beta_{\infty}(Q,\ell^{t})^{2} \, dt \lesssim \beta^{1}_{\infty,2}(Q)^{2}. \end{equation}
(By definition of $\beta^{1}_{\infty,2}(Q)$, this is true in almost all directions, even if it happens to fail for horizontal lines). Then, consider a fixed line $\ell^{t} \in \calA_{1}(cQ)$, and let $A^{t}$ be an affine map minimising $\beta_{\infty}(Q,\ell^{t})$. Note that $\ell^{t}$ now meets $\partial \bigtriangleup$ in two points $\{x_{t},y_{t}\}$ with $|x_{t} - y_{t}| \sim_{c} 1$. For notational convenience, assume that always $x_{t} \in \ell_{1}$ and $y_{t} \in \ell_{2}$. Then, 
\begin{displaymath} |A(x_{t}) - A^{t}(x_{t})| \leq |A(x_{t}) - A_{1}(x_{t})| + |A_{1}(x_{t}) - f(x_{t})| + |f(x_{t}) - A^{t}(x_{t})| \lesssim \beta(Q) + \beta_{\infty}(Q,\ell^{t}) \end{displaymath}
by (ii), \eqref{form20} and the definition of $\beta_{\infty}(Q,\ell^{t})$. The same estimate holds for $y_{t}$, and consequently
\begin{displaymath} \|A - A^{t}\|_{L^{\infty}(\ell^{t} \cap Q)} \lesssim_{c} \beta_{\infty}(Q,\ell^{t}) + \beta(Q). \end{displaymath} 
It now follows from Fubini's theorem and \eqref{form21} that
\begin{align*} \beta_{2}(cQ)^{2} \lesssim_{c} \int_{cQ} |f - A|^{2} \, d\calL^{2} \leq & \int_{\{t : \ell^{t} \cap cQ \neq \emptyset\}} \int_{cQ \cap \ell^{t}} |f - A_{t}|^{2} \, d\calH^{1} \, dt\\
& + \int_{\{t : \ell^{t} \cap cQ \neq \emptyset\}} \int_{cQ \cap \ell^{t}} + |A_{t} - A|^{2} \, d\calH^{1} \, dt\\
& \lesssim \int_{0}^{1} \beta_{\infty}(Q,\ell^{t})^{2} + \beta(Q)^{2} \, dt\\
& \lesssim \beta_{\infty,2}^{1}(Q)^{2} + \beta(Q)^{2} \lesssim \beta(Q). \end{align*} 
This completes the proof for $n = 2$.

\subsubsection{The general case}\label{generalCase} Where does the argument above fail $n \geq 3$? The main problem (not the only one) is \eqref{form22}: the lines $\ell_{j}$ should be viewed as $(n - 1)$-planes, so the proof above would yield \eqref{form22} with $\beta_{\infty,2}^{n - 1}(Q)$ on the right hand side. This quantity is no longer dominated by $\beta(Q)$ when $n \geq 3$. Eventually, it is possible to get $\beta_{2,2}^{n - 1}(Q) \leq \beta(Q)$ on the right hand side of \eqref{form22} (see \eqref{form23}), but this takes some more work -- and is, in effect, the main content of Lemma \ref{mainLemma} below.

\begin{definition}\label{transDef} For $\tau > 0$, a family $\calV \subset \calA_{n - 1}$ is called \emph{$\tau$-transversal} if for all $n$-element subsets $\calV_{0} \subset \calV$, the determinant of the normal vectors of the planes in $\calV_{0}$ is at least $\tau$. \end{definition}

\begin{ex}\label{transEx} Let $\bigtriangleup \subset \R^{n}$ be a simplex with $n + 1$ faces and positive Lebesgue measure. Let $V_{1},\ldots,V_{n + 1} \in \calA_{n - 1}$ be the planes containing the faces of $\bigtriangleup$. Then $\{V_{1},\ldots,V_{n + 1}\}$ is $\tau$-transversal for some $\tau > 0$. Also, if $d$ is some natural metric on $\calA_{n - 1}$ (in particular the metric defined below), $\epsilon > 0$ is small enough (depending on $\tau$), and $V_{j}' \in \calA_{n - 1}$ with $d(V_{j},V_{j}') < \epsilon$, then $\{V_{1}',\ldots,V_{n + 1}'\}$ is $(\tau/2)$-transversal. \end{ex}

The following metric $d$ will be used on $\calA_{n - 1}$. Any $V \in \calA_{n - 1}$ can be written as $V = \{x : x \cdot e = t\}$, where $e \in S^{n - 1}$ is normal to $V$, and $t \in \R$. The pair $(e,t)$ is unique up to sign. If $V_{1},V_{2}$ are then associated to $(e_{1},t_{1})$ and $(e_{2},t_{2})$, respectively, write 
\begin{displaymath} d(V_{1},V_{2}) := \min\{|(e_{1},t_{1}) - (e_{2},t_{2})|, |(e_{1},t_{1}) + (e_{2},t_{2})|\}, \end{displaymath}
where $|\cdot|$ refers to Euclidean metric on $S^{n - 1} \times \R \subset \R^{n + 1}$.

\begin{remark}\label{transRem} I record the following corollary of transversality: if $\calV \subset \calA_{n - 1}(Q)$ is a $\tau$-transversal family, and if $C \geq 1$ is sufficiently large (depending on $\tau > 0$), then the unique point in $V_{1} \cap \ldots \cap V_{n}$ lies in $CQ$ for any distinct $V_{1},\ldots,V_{n} \in \calV$. \end{remark}

Recall that $Q = [0,1]^{n}$; this is not too important, but it simplifies notation by eliminating the constant need to normalise by $\diam(Q)$. 

\begin{lemma}\label{mainLemma} Let $\tau > 0$. Fix any $n + 1$ planes $V_{1},\ldots,V_{n + 1} \in \calA_{n - 1}(\tfrac{1}{2}Q)$ so that $\{V_{1},\ldots,V_{n + 1}\}$ is $\tau$-transversal. The following holds if $C \geq 1$ is sufficiently large and $\epsilon > 0$ is sufficiently small, depending on $\tau$. There exist planes $V_{1}',\ldots,V_{n + 1}' \in \calA_{n - 1}(Q)$ with the following properties:
\begin{itemize}
\item[\textup{(a)}] $d(V_{j},V_{j}') \leq \epsilon$,
\item[\textup{(b)}] $\beta_{2}(CQ,V_{j}') \lesssim_{C,\epsilon} \beta_{2,2}^{n - 1}(CQ)$,
\item[\textup{(c)}] For every $1 \leq j \leq n + 1$, there exists an affine quasi-minimiser $A_{j}$ for $\beta_{2}(CQ,V_{j}')$ with the following property. If $1 \leq i_{1} < \ldots < i_{n} \leq n + 1$, and $x$ is the unique point in $V_{i_{1}}' \cap \ldots \cap V_{i_{n}}'$, then $x \in CQ$, and
\begin{displaymath} |f(x) - A_{i_{j}}'(x)| \lesssim_{C,\epsilon} \beta_{2,2}^{n - 1}(CQ), \qquad 1 \leq j \leq d.  \end{displaymath} 
\end{itemize}
\end{lemma}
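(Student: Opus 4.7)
The plan is to obtain $V_1',\ldots,V_{n+1}'$ by a two-stage random perturbation of the given $V_j$, with all estimates coming from Chebyshev's inequality applied to integrals over the perturbation parameters. Write $V_j = \{x : x \cdot e_j = t_j\}$ with $e_j \in S^{n-1}$. The starting observation is that, by \eqref{etaD},
\begin{displaymath}
c_{n-1}\int_{S^{n-1}}\int_{\R}\beta_{2}(CQ,V(e,t))^{2}\,dt\,d\sigma(e) = \int_{\calA_{n-1}(CQ)}\beta_{2}(CQ,V)^{2}\,d\eta_{n-1}(V) \lesssim_{C}\beta_{2,2}^{n-1}(CQ)^{2},
\end{displaymath}
where $V(e,t) := \{x : x \cdot e = t\}$. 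Restricting the outer $\sigma$-integral to an $\epsilon/2$-cap around each $e_j$ (of $\sigma$-measure $\sim \epsilon^{n-1}$) and using Chebyshev, for each $j$ I would select a normal direction $e_j'$ with $|e_j' - e_j| \leq \epsilon/2$ such that
\begin{displaymath}
\int_{\R}\beta_{2}(CQ, V(e_j', t))^{2}\,dt \lesssim_{C,\epsilon}\beta_{2,2}^{n-1}(CQ)^{2}.
\end{displaymath}
For $\epsilon$ small in terms of $\tau$, Example \ref{transEx} keeps the perturbed family $(\tau/2)$-transversal.

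Next, set $V_j'(s) := \{x : x \cdot e_j' = t_j + s\}$ for $s$ in the interval $I := [-\epsilon/2,\epsilon/2]$, and for each $s \in I$ let $A_j(s) \colon \R^n \to \R$ be the affine $L^2$-minimiser on $V_j'(s) \cap CQ$, so that
\begin{displaymath}
\int_{V_j'(s) \cap CQ}|f - A_j(s)|^{2}\,d\calH^{n-1} \lesssim_{C}\beta_{2}(CQ, V_j'(s))^{2}.
\end{displaymath}
Fix a subset $\{i_1 < \cdots < i_n\} \subset \{1,\ldots,n+1\}$ and $k \in \{i_1,\ldots,i_n\}$. The intersection $x(s) := V_{i_1}'(s_{i_1}) \cap \cdots \cap V_{i_n}'(s_{i_n})$ depends affinely on $(s_{i_1},\ldots,s_{i_n})$, lies in $CQ$ by Remark \ref{transRem} (once $C$ is large and $\epsilon$ small), and for fixed $s_k$ the map $(s_m)_{m \in \{i_1,\ldots,i_n\}\setminus\{k\}} \mapsto x(s)$ is an affine diffeomorphism onto an open subset of $V_k'(s_k)$ whose Jacobian is bounded above and below in terms of $\tau$. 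A change of variables in this map then yields
\begin{displaymath}
\int_{I^{n-1}}|f(x(s)) - A_k(s_k)(x(s))|^{2}\prod_{m \neq k} ds_m \lesssim_{\tau,C}\beta_{2}(CQ, V_k'(s_k))^{2}.
\end{displaymath}
Integrating over $s_k \in I$ (and using the pigeonhole bound from the first paragraph) and then over the remaining free variable $s_\ell$, $\ell \notin \{i_1,\ldots,i_n\}$, which only contributes a factor $|I| = \epsilon$,
\begin{displaymath}
\int_{I^{n+1}}|f(x(s)) - A_k(s_k)(x(s))|^{2}\,ds \lesssim_{\tau, C, \epsilon}\beta_{2,2}^{n-1}(CQ)^{2}\cdot\epsilon.
\end{displaymath}

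There are $(n+1)^{2}$ conditions in total: the $n(n+1)$ estimates above corresponding to (c), and the $n+1$ estimates $\int_{I}\beta_{2}(CQ, V_j'(s_j))^{2}\,ds_j \lesssim_{C,\epsilon}\beta_{2,2}^{n-1}(CQ)^{2}$ needed for (b). Chebyshev applied to the sum of all these non-negative integrands, divided by $|I|^{n+1} \sim \epsilon^{n+1}$, produces some $s^{*} \in I^{n+1}$ at which every integrand is $\lesssim_{C,\epsilon}\beta_{2,2}^{n-1}(CQ)^{2}$. Setting $V_j' := V_j'(s_j^{*})$ and $A_j := A_j(s_j^{*})$, and noting $d(V_j, V_j') \leq \epsilon$, establishes (a)--(c) after taking square roots in (b) and (c).

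The main obstacle is the change-of-variables step: for fixed $s_k$ one must verify that the Jacobian of $(s_m)_{m \neq k} \mapsto x(s) \in V_k'(s_k)$ is comparable, up to constants depending only on $n$, to the absolute value of a certain $(n-1)\times(n-1)$ determinant extracted from the matrix of perturbed normals $\{e_m' : m \neq k\}$ after projection to $V_k'(s_k)$, and that this determinant is bounded above and below by constants depending only on $\tau$, thanks to $(\tau/2)$-transversality. This is a short linear algebra computation and is the only genuinely geometric ingredient in the argument.
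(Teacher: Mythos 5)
Your proof is correct and follows the paper's strategy exactly: Chebyshev on $\eta_{n-1}$ to pick normal directions $e_{j}'$, change of variables via $\tau$-transversality to relate the intersection-point map to the $\beta_{2}(CQ,V_{j}')$ integrals, and a final Chebyshev to find a simultaneously good perturbation parameter. Your organization is marginally cleaner -- a single Chebyshev on the sum of $(n+1)^{2}$ integrals over $I^{n+1}$ replaces the paper's two-stage argument (a pointwise Chebyshev to define good sets $G_{j} \subset B_{j}$, followed by a separate combinatorial gluing step with the sets $\tilde{H}_{j}$) -- but the key geometric ingredient, the Jacobian bound $\sim_{\tau} 1$ for the foliation map coming from transversality, is the same.
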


Property (c) is a "compatibility" condition for the maps $A_{1},\ldots,A_{n + 1}$: it implies that whenever $x$ is a "corner" of the simplex bounded by the planes $V_{1}',\ldots,V_{n + 1}'$, then all the affine maps associated to the $n$ planes meeting at $x$ have nearly the same value at $x$.

\begin{proof}[Proof of Lemma \ref{mainLemma}] The proof is a combination of Chebyshev's inequality and Fubini's theorem. Consider first a fixed plane $V_{j}$, $1 \leq j \leq n + 1$. Let $e_{j} \in S^{n - 1}$ be a direction normal to $V$, and let $e_{j}' \in S^{n - 1}$ with $|e_{j} - e_{j}'| \leq \epsilon$. Write 
\begin{displaymath} V_{j}'(t) := \rho_{e_{j}'}^{-1}\{te_{j}'\} = \{x : x \cdot e_{j}' = t\} \in \calA_{n - 1}, \end{displaymath}
and let $s_{j} \in \R$ be a parameter minimising $t \mapsto d(V_{j},V_{j}'(t))$. Since $V_{j} \cap \tfrac{1}{2}[0,1]^{n} \neq \emptyset$, it is easy to see from the definition of $d$ that
\begin{equation}\label{form17} d(V_{j},V_{j}'(t + s_{j})) \lesssim \epsilon, \qquad 0 \leq t \leq \epsilon. \end{equation} 
Also, $V_{j}'(t) \cap Q \neq \emptyset$ for all $t \in [0,\epsilon]$, if $\epsilon > 0$ is sufficiently small. Then, by definition of $\beta_{2,2}^{n - 1}(CQ)$ and the measure $\eta_{n - 1}$ (recall \eqref{etaD}), and Chebyshev's inequality, one can find $e_{j}' \in S^{n - 1}$ with $|e_{j} - e_{j}'| \leq \epsilon$ such that
\begin{equation}\label{form5} \int_{0}^{\epsilon} \beta_{2}(CQ,V_{j}'(t + s_{j}))^{2} \, dt \lesssim_{C,\epsilon} \beta_{2,2}^{n - 1}(CQ)^{2}. \end{equation} 
Now, applying Chebyshev's inequality again to \eqref{form5}, and recalling \eqref{form17}, one could easily find $V_{j}' := V_{j}'(t + s_{j})$, $0 \leq t \leq \epsilon$, satisfying (a) and (b). For (c), more work is needed.

For $t \in [0,\epsilon]$, let $A_{j,t}$ be an affine (quasi-)minimiser for $\beta_{2}(CQ,V_{j}'(t + s_{j}))$. In other words,
\begin{equation}\label{form6} \int_{CQ \cap V_{j}'(t + s_{j})} |f(x) - A_{j,t}(x)|^{2} \, d\calH^{n - 1}(x) \lesssim \beta_{2}(CQ,V_{j}'(t + s_{j}))^{2}. \end{equation} 
As $t \in [0,\epsilon]$ varies, the sets $CQ \cap V_{j}'(t + s_{j})$ foliate a certain part of $CQ$, which I denote by $B_{j}$, that is,
\begin{displaymath} B_{j} := CQ \cap \bigcup_{t \in [0,\epsilon]} V_{j}'(t + s_{j}). \end{displaymath}
Combining \eqref{form5}-\eqref{form6} with yet another application of Chebyshev's inequality, there is a subset $G_{j} \subset B_{j}$ of measure $|G_{j}| \geq (1 - \epsilon^{2d})|B_{j}|$ such that
\begin{equation}\label{form7} |f(x) - A_{j,t}(x)| \lesssim_{C,\epsilon} \beta_{2,2}^{n - 1}(CQ), \qquad x \in G_{j}, \end{equation} 
where $t \in [0,\epsilon]$ is the unique parameter such that $x \in V_{j}'(t + s_{j})$.

\begin{remark}[Mid-proof remark]\label{midProofRem} Let $1 \leq i_{1} < \ldots < i_{n} \leq n + 1$, and let 
\begin{displaymath} V_{i_{1}}'(t_{1} + s_{i_{1}}),\ldots,V_{i_{n}}'(t_{n} + s_{i_{n}}) \in \calA_{n - 1}(Q) \end{displaymath}
be planes with $t_{j} \in [0,\epsilon]$, with the notation above. Then, if $\epsilon > 0$ is small enough (depending on $\tau$), and $C \geq 1$ is large enough (depending on $\epsilon)$, and recalling \eqref{form17}, Example \ref{transEx}, and Remark \ref{transRem}, the intersection $V_{i_{1}}'(t_{1} + s_{i_{1}}) \cap \ldots \cap V_{i_{n}}'(t_{n} + s_{i_{n}})$ contains a single point which lies in $CQ$.
\end{remark}

Now, fix $1 \leq i_{1} < \ldots < i_{n} \leq n + 1$.  Observe that, assuming $\epsilon > 0$ so small $\det(e_{i_{1}}',\ldots,e_{i_{n}}') \geq \tau/2$, the map 
\begin{displaymath} \Phi = \Phi_{i_{1},\ldots,i_{n}} \colon [0,\epsilon]^{n} \to B_{i_{1}} \cap \ldots \cap B_{i_{n}}, \end{displaymath}
defined by the relation
\begin{displaymath} \Phi(t_{1},\ldots,t_{n}) \in V_{i_{1}}'(t_{1} + s_{i_{1}}) \cap \ldots \cap V_{i_{n}}'(t_{n} + s_{i_{n}}) \end{displaymath} 
has determinant $\gtrsim_{\tau} 1$. Here we used Remark \ref{midProofRem} to ensure that the target of $\Phi$ actually lies in $B_{i_{1}} \cap \ldots \cap B_{i_{n}}$. It follows that $\Phi^{-1}$ is well-defined on the range of $\Phi$, and recalling the definition of $G_{j}$, one has
\begin{displaymath} |\Phi^{-1}([B_{i_{1}} \cap \ldots \cap B_{i_{n}}] \setminus [G_{i_{1}} \cap \ldots \cap G_{i_{n}}])| \lesssim_{\tau} n\epsilon^{2n}. \end{displaymath}
Since $|[0,\epsilon]^{n}| = \epsilon^{n}$, this implies that for $\epsilon > 0$ small enough, a random choice $(t_{1},\ldots,t_{n}) \in [0,\epsilon]^{n}$ satisfies
\begin{equation}\label{form8} V_{i_{1}}'(t_{1} + s_{i_{1}}) \cap \ldots \cap V_{i_{n}}'(t_{n} + s_{i_{n}}) = \{\Phi(t_{1},\ldots,t_{n})\} \subset G_{i_{1}} \cap \ldots \cap G_{i_{n}} \end{equation}
and
\begin{equation}\label{form100} \beta_{2}(CQ,V_{j}'(t_{j} + s_{i_{j}})) \lesssim_{C,\epsilon} \beta^{n - 1}_{2,2}(CQ), \qquad 1 \leq j \leq n \end{equation}
with probability $\geq 1 - \epsilon$. The assertion about \eqref{form100} simply follows from \eqref{form5}. Unwrapping the definitions, \eqref{form8} means that the unique point $x \in V_{i_{1}}'(t_{1} + s_{i_{1}}) \cap \ldots \cap V_{i_{n}}(t_{n} + s_{i_{n}})$ lies in each $G_{i_{j}}$, $1 \leq j \leq n$, which by \eqref{form7} means that 
\begin{displaymath} |f(x) - A_{i_{j},t_{j}}(x)| \lesssim_{C,\epsilon} \beta_{2,2}^{n - 1}(CQ), \qquad 1 \leq j \leq n, \end{displaymath}
as required in (c). However, a little technicality remains: the choice of $(t_{1},\ldots,t_{n})$ depends on the choice of $1 \leq i_{1} < \ldots < i_{n} \leq n + 1$, because $\Phi = \Phi_{i_{1},\ldots,i_{n}}$ does. A further combinatorial argument is therefore required.

Recall again that a randomly selected $n$-tuple $(t_{1},\ldots,t_{n}) \in [0,\epsilon]^{n}$ satisfied \eqref{form8}-\eqref{form100} with probability $\geq 1 - \epsilon$, for a fixed choice of $1 \leq i_{1} < \ldots < i_{n} \leq n + 1$. Denote the corresponding set of (good) $n$-tuples by $H_{i_{1},\ldots,i_{n}}$, so that 
\begin{equation}\label{form10} |H_{i_{1},\ldots,i_{n}}| \geq (1 - \epsilon)\epsilon^{n}. \end{equation}
Consider next the map $\Psi \colon [0,\epsilon]^{n + 1} \to (\R^{n})^{n}$,
\begin{displaymath} \Psi(t_{1},\ldots,t_{n + 1}) := (\Psi_{1}(t_{1},\ldots,t_{n + 1}),\ldots,\Psi_{n}(t_{1},\ldots,t_{n + 1})), \end{displaymath}
where
\begin{displaymath} \Psi_{j}(t_{1},\ldots,t_{n + 1}) = \Phi_{1,\ldots,j - 1,j + 1,\ldots,n + 1}(t_{1},\ldots,t_{j - 1},t_{j + 1},\ldots,t_{n + 1}). \end{displaymath}
When $1 \leq j \leq n + 1$ is fixed, then \eqref{form8}-\eqref{form100} imply that
\begin{equation}\label{form8b} \Psi_{j}(t_{1},\ldots,t_{n + 1}) \in G_{1} \cap \ldots \cap G_{j - 1} \cap G_{j + 1} \cap \ldots \cap G_{n + 1} \end{equation} 
and
\begin{equation}\label{form10b} \beta_{2}(CQ,V_{i}'(t_{i} + s_{i})) \lesssim_{C,\epsilon} \beta^{n - 1}_{2,2}(CQ), \qquad i \in \{1,\ldots,j - 1,j + 1,\ldots,n + 1\} \end{equation}
for all $(t_{1},\ldots,t_{n + 1}) \in [0,\epsilon]^{n}$ with
\begin{equation}\label{form9} (t_{1},\ldots,t_{j - 1},t_{j - 1},\ldots,t_{n + 1}) \in H_{1,\ldots,j - 1,j + 1,\ldots,n + 1} \quad \text{and} \quad t_{j} \in [0,\epsilon], \end{equation}
because neither condition \eqref{form8b}-\eqref{form10b} depends on the variable $t_{j}$. Denote the set of points in $[0,\epsilon]^{n + 1}$ satisfying \eqref{form9} by $\tilde{H}_{j}$, so $|\tilde{H}_{j}| \geq (1 - \epsilon)\epsilon^{n + 1}$ by \eqref{form10}. Now, if $0 < \epsilon < (n + 1)^{-1}$ (in addition to all previous requirements), there exists
\begin{displaymath} (t_{1},\ldots,t_{n + 1}) \in \bigcap_{j = 1}^{n + 1} \tilde{H}_{j}. \end{displaymath}
This implies that 
\begin{equation}\label{form11} \beta_{2}(CQ,V_{i}'(t_{j} + s_{j})) \lesssim_{C,\epsilon} \beta^{n - 1}_{2,2}(CQ), \qquad 1 \leq j \leq n + 1, \end{equation}
and
\begin{equation}\label{form12} \Phi_{i_{1},\ldots,i_{n}}(t_{i_{1}},\ldots,t_{i_{n}}) \in G_{i_{1}} \cap \ldots \cap G_{i_{n}} \end{equation}
simultaneously for all $1 \leq i_{1} < \ldots < i_{n} \leq n + 1$. Now, given the vector $(t_{1},\ldots,t_{n + 1})$ as above, one can finally define
\begin{displaymath} V_{j}' := V_{j}'(t_{j} + s_{j}), \qquad 1 \leq j \leq n + 1. \end{displaymath}
Let $A_{j} := A_{j,t_{j}}$ be the minimiser for $\beta_{2}(Q,V_{j}')$, as in \eqref{form7}. Then (b) holds by \eqref{form11} and (c) holds by \eqref{form12} (repeating the argument given after \eqref{form100}). \end{proof}

\subsubsection{Concluding the proof of \eqref{form1}}\label{conclusion} All the pieces are now in place to conclude the proof of \eqref{form1}, and hence the proof of Theorem \ref{main}. The remaining arguments will largely follow those already seen in Section \ref{quickProof}.
\begin{proof}[Proof of \eqref{form1}] Let $C \geq 1$ be a constant, which is at least as large as the constant from the preceding lemma, and let $0 < c \ll 1$ be the constant from \eqref{form1}; I will tacitly assume that $c$ is as small as needed for the following arguments. Let $\bigtriangleup_{0}$ be a (closed) simplex bounded by $n + 1$ faces (of dimension $n - 1$) with 
\begin{displaymath} 10cQ \subset \bigtriangleup_{0} \subset \tfrac{1}{2}Q. \end{displaymath}
Let $V_{1},\ldots,V_{n + 1} \in \calA_{n - 1}(\tfrac{1}{2}Q)$ be the planes containing the faces of $\bigtriangleup_{0}$. Then, apply Lemma \ref{mainLemma} to locate the planes $V_{1}',\ldots,V_{n + 1}' \in \calA_{n - 1}(Q)$ satisfying (a)-(c), and let $A_{j}$ be the minimiser for $\beta_{2}(CQ,V_{j}')$ from (c). Finally, let $\bigtriangleup$ be the simplex whose $(n - 1)$-faces are contained on the planes $V_{1}',\ldots,V_{n + 1}'$; denote these faces by $\bigtriangleup_{j}$. Choosing $\epsilon > 0$ small enough in (a), it still holds that $5cQ \subset \bigtriangleup$. The dependence on this "$\epsilon$" will be suppressed form the $\lesssim$ notation, as it is a constant depending only on $n$.

Let $A$ be the unique affine map whose values coincide with $f$ at the $n + 1$ corners of $\bigtriangleup$. I claim that $A$ is close to $A_{j}$ on $\bigtriangleup_{j}$. To see this, note that $\bigtriangleup_{j}$ is a simplex containing $n$ corners $x^{j}_{1},\ldots,x^{j}_{n}$ of $\bigtriangleup$ (in particular, the separation of these points is $\sim_{c} 1$), and 
\begin{equation}\label{form23} |A(x^{j}_{i}) - A_{j}(x^{j}_{i})| = |f(x^{j}_{i}) - A_{j}(x^{j}_{i})| \lesssim_{C} \beta_{2,2}^{n - 1}(CQ), \qquad 1 \leq i \leq d, \end{equation}
by the choice of $A$, and (c) of Lemma \ref{mainLemma}. It follows that
\begin{displaymath} \|A - A_{j}\|_{L^{\infty}(\bigtriangleup_{j})} \lesssim_{C} \beta^{n - 1}_{2,2}(CQ). \end{displaymath}
This estimate, combined with (b) of Lemma \ref{mainLemma}, implies that $f$ is well-approximated by $A$ on the sides of $\bigtriangleup$. It remains to argue that $f$ is, also, well-approximated by $A$ inside $\bigtriangleup$ -- and, in particular, on $cQ$.

The argument is similar to the one seen in Section \ref{quickProof}. Recall the definition of $\beta_{\infty,2}^{1}(CQ)$ from \eqref{alphapq} and the definition of the measure $\eta_{1}$ from \eqref{eta1}. One can now find $e \in S^{n - 1}$ such that the following holds. Let $\pi := \pi_{e}$ be the orthogonal projection to $V := e^{\perp}$, and write $\ell_{v} := \pi^{-1}\{v\}$ for $v \in V$. Then, in analogy with \eqref{form21},
\begin{equation}\label{form16} \int_{\pi(cQ)} \beta_{\infty}(CQ,\ell_{v})^{2} \, d\calH^{n - 1}(v) \lesssim_{C} \beta_{\infty,2}^{1}(CQ)^{2}. \end{equation} 
For technical reasons, I mention here that the vector $e \in S^{1}$ can be chosen arbitrarily close to any given vector $e_{0} \in S^{1}$, allowing the implicit constants in \eqref{form16} to depend on $|e - e_{0}|$.

Note that the lines $\ell_{v}$, $v \in \pi(cQ)$, meet $cQ$, and hence also $\partial \bigtriangleup$. In fact, $\partial \bigtriangleup \cap \ell_{v} = \{x_{v},y_{v}\}$ with $|x_{v} - y_{v}| \sim_{c} 1$ whenever $v \in \pi(cQ)$, because $cQ \subset 5cQ \subset \bigtriangleup$. Write
\begin{equation}\label{deltav} \delta(v) := \max\{|f(x_{v}) - A(x_{v})|, |f(y_{v}) - A(y_{v})|\}. \end{equation}
Then, for $v \in \pi(cQ)$ fixed, let $A^{v}$ be an affine map minimising for $\beta_{\infty}(CQ,\ell_{v})$, and note that
\begin{align*} |A(x_{v}) - A^{v}(x_{v})| & \leq |A(x_{v}) - f(x_{v})| + |f(x_{v}) - A^{v}(x_{v})| \lesssim \delta(v) + \beta_{\infty}(CQ,\ell_{v}), \end{align*}
using also that $\bigtriangleup \subset CQ$. Since the same estimate holds for $y_{v}$, and $|x_{v} - y_{y}| \sim_{c} 1$, one infers that
\begin{displaymath} \|f - A\|_{L^{\infty}(cQ \cap \ell_{v})} \leq \beta_{\infty}(CQ,\ell_{v}) +  \|A - A^{v}\|_{L^{\infty}(\bigtriangleup \cap \ell_{v})} \lesssim_{c} \delta(v) + \beta_{\infty}(CQ,\ell_{v}). \end{displaymath}
Consequently, by Fubini's theorem,
\begin{align*} \beta_{2}(cQ)^{2} \lesssim_{c} \int_{cQ} |f - A|^{2} \, d\calL^{n} & = \int_{\pi(cQ)} \int_{\ell_{v} \cap cQ} |f - A|^{2} \, d\calH^{1} \, d\calH^{n - 1}(v)\\
& \lesssim_{c} \int_{\pi(cQ)} [\beta_{\infty}(CQ,\ell_{v})^{2} + \delta(v)^{2}] \, d\calH^{n - 1}(v). \end{align*}
The integral of the numbers $\beta_{\infty}(CQ,\ell_{v})^{2}$ is controlled by $\beta_{\infty,2}^{1}(CQ)^{2} \leq \beta(CQ)^{2}$ by \eqref{form16}. To deal with the numbers $\delta(v)$, one should start by recalling their definition from \eqref{deltav}. Since $x_{v},y_{v} \in \partial \bigtriangleup$, both points are contained on sets of the form $CQ \cap V_{j}'$ for some $1 \leq j \leq n + 1$, and hence the $L^{2}$-integral of $v \mapsto \delta(v)$ is bounded as follows (recalling also (b) of Lemma \ref{mainLemma}):
\begin{displaymath} \int_{\pi(cQ)} \delta(v)^{2} \, d\calH^{n - 1}(v) \lesssim \sum_{j = 1}^{n + 1} \beta_{2}(CQ,V_{j}')^{2} \lesssim_{C} \beta_{2,2}^{n - 1}(CQ)^{2} \leq \beta(CQ)^{2}. \end{displaymath} 
To be precise, the first estimate requires that the lines $\ell_{v}$, $v \in V$, are reasonably transversal to each plane $V_{j}'$, but this can be arranged by choosing the vector $e \in S^{n - 1}$ appropriately; recall the remark after \eqref{form16}. Combining the estimates above, one has $\beta_{2}(cQ) \lesssim_{c,C} \beta(Q)$, and the proof of \eqref{form1} is complete.  \end{proof}
 
 \section{Proofs in parabolic space}\label{parabolicSection}
 
 \subsection{Parabolic Lipschitz functions} I start with the proper definition of parabolic Lipschitz functions, following \cite{FR, HL}, although, in practice, only geometric corollaries of it will be used.
\begin{definition}\label{parabolicLipschitz} A continuous function $\psi \colon \R^{n} \to \R$ is called \emph{parabolic $L$-Lipschitz} if 
\begin{equation}\label{horizontalLipschitz} |\psi(x,t) - \psi(y,t)| \leq L|x - y|, \qquad x,y \in \R^{n - 1}, \: t \in \R, \end{equation} 
and $\mathbb{D}_{n}\psi \in \mathrm{BMO}(\R^{n})$ with norm at most $L$, where $\mathbb{D}_{n}$ stands for the half-order time derivative defined by
\begin{displaymath} \mathbb{D}_{n}\psi(x,t) = \left(\frac{\tau}{\|(\xi,\tau)\|} \hat{A}(\xi,\tau) \right)^{\vee}(x,t), \qquad (x,t),(\xi,\tau) \in \R^{n - 1} \times \R, \end{displaymath}
and $\mathrm{BMO}(\R^{n})$ refers to the BMO space defined through parabolic balls.  \end{definition}
Since this definition is not directly used in the paper, I refer to \cite{FR, HL} for further details. The following proposition is implication $(3) \Longrightarrow (5b)$ (or alternatively $(3) \Longrightarrow (6b)$) in \cite[Theorem 2.3]{2018arXiv180507270D}:
\begin{proposition} Assume that $\psi \colon \R^{n} \to \R$ is parabolic $L$-Lipschitz. Then, there exists a constant $C_{L} > 0$ such that
\begin{equation}\label{DtCarleson} \frac{1}{|Q|} \int_{I_{1}} \iint_{I_{2} \times I_{2}} \frac{|\psi(x,t) - \psi(x,s)|^{2}}{|s - t|^{2}} \, ds \, dt \leq C_{L} \end{equation}
for all parabolic boxes $Q = I_{1} \times I_{2} \subset \R^{n - 1} \times \R$.
\end{proposition}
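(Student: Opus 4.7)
The plan is to interpret, for fixed $x \in I_1$, the inner double integral as the squared $\dot H^{1/2}$-Gagliardo seminorm of $\psi(x,\cdot)$ on $I_2$, and to connect it, via Fourier analysis, with the hypothesis that $\mathbb{D}_n\psi \in \mathrm{BMO}$. I would begin from the classical Plancherel identity
\begin{displaymath} \iint_{\R \times \R} \frac{|g(t)-g(s)|^2}{|t-s|^2}\, dt\, ds \;\asymp\; \int_\R |\hat g(\tau)|^2\,|\tau|\, d\tau \;=\; \|D^{1/2}_t g\|_{L^2(\R)}^2, \end{displaymath}
valid for Schwartz $g \colon \R \to \R$, where $D^{1/2}_t$ is the one-dimensional half-time derivative with Fourier symbol $|\tau|^{1/2}$. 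Inserting a smooth cutoff in $t$ adapted to a slight enlargement of $I_2$, and applying this identity slicewise in $x \in I_1$, reduces \eqref{DtCarleson} to the Carleson-type bound
\begin{displaymath} \frac{1}{|Q|}\int_Q |D^{1/2}_t\psi(x,t)|^2\, dx\, dt \;\lesssim\; 1, \end{displaymath}
modulo localization error terms that should be absorbable using \eqref{horizontalLipschitz}.

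To produce this estimate, I would next compare $D^{1/2}_t$ with the parabolic operator $\mathbb{D}_n$. Both are of parabolic order one --- their symbols $|\tau|^{1/2}$ and $\tau/\|(\xi,\tau)\|$ scale identically under the parabolic dilation $(\xi,\tau)\mapsto (\lambda\xi,\lambda^2\tau)$ --- so their ratio corresponds to a parabolic-order-zero Fourier multiplier, i.e.\ to a Calder\'on--Zygmund operator adapted to parabolic scaling, which is bounded on parabolic $\mathrm{BMO}$. Hence the hypothesis $\mathbb{D}_n\psi \in \mathrm{BMO}$ with norm $\lesssim L$ transfers to $D^{1/2}_t\psi \in \mathrm{BMO}$ with norm $\lesssim L$. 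An equivalent implementation would recast both quantities as Carleson-measure conditions for the caloric extension $u$ of $\psi$ to the parabolic upper half-space, and translate between them by a $T1$-type argument. The John--Nirenberg inequality on the parabolic box $Q$ would then yield a constant $c_Q$ with $|Q|^{-1}\int_Q|D^{1/2}_t\psi - c_Q|^2 \lesssim L^2$, and substituting this back into the Plancherel identity from the first step --- noting that adding a constant to $\psi(x,\cdot)$ does not alter the Gagliardo seminorm --- closes the argument.

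The main obstacle I anticipate is the localisation in the first step: Plancherel is intrinsically global, whereas \eqref{DtCarleson} is localised to $I_2 \times I_2$. The parabolic scaling of the temporal cutoff (of width $\ell(I_1)^2$) must be matched carefully to the spatial Lipschitz control, and the ``tail'' contributions, in which $\psi(x,t)$ is evaluated far outside $I_2$, must be shown to be dominated by the principal term. A further subtlety is that the symbol quotient relating $D^{1/2}_t$ and $\mathbb{D}_n$ is singular on the cone $\{\tau = 0\}$, so the Calder\'on--Zygmund comparison in Step 2 has to be implemented either through the caloric extension or through a Littlewood--Paley decomposition that avoids very low frequencies. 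Both strategies are essentially what is carried out in \cite{2018arXiv180507270D}.
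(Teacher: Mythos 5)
The paper does not prove this proposition at all: it is imported verbatim as the implication $(3)\Rightarrow(5\mathrm{b})$ of Theorem~2.3 in \cite{2018arXiv180507270D}, and the follow-up Remark in the paper only summarises that the cited derivation first shows the full parabolic derivative $\mathbb{D}\psi$ (symbol $\|(\xi,\tau)\|$, \emph{not} the half-order time derivative) lies in parabolic BMO and then invokes Strichartz's characterisation of BMO via Gagliardo-type integrals. So there is no ``paper's own proof'' to compare against, and your outline is broadly consonant with the route taken in the cited reference, as you acknowledge at the end.

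That said, there is a genuine gap in Step 2 as written. The claim that $\mathbb{D}_n\psi\in\mathrm{BMO}$ ``transfers'' to $D^{1/2}_t\psi\in\mathrm{BMO}$ via a parabolic-order-zero Calder\'on--Zygmund multiplier is not correct, because the relevant symbol ratio $|\tau|^{1/2}\,\|(\xi,\tau)\|/\tau \asymp \operatorname{sgn}(\tau)\,\|(\xi,\tau)\|/|\tau|^{1/2}$ grows like $|\xi|/|\tau|^{1/2}$ as $\tau\to 0$ with $\xi$ fixed; the corresponding multiplier is not even $L^{2}$-bounded, so it is not a CZ operator and certainly does not preserve BMO. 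You flag the singularity on $\{\tau=0\}$ as a ``subtlety'' to be handled by a caloric extension or a Littlewood--Paley cutoff of low frequencies, but this understates the difficulty: the low-$\tau$/high-$|\xi|$ regime is precisely where the horizontal Lipschitz hypothesis \eqref{horizontalLipschitz} must enter the \emph{main} estimate, not merely the localisation error terms. On that part of frequency space one should write $|\tau|^{1/2}\hat\psi = \bigl(|\tau|^{1/2}/\|(\xi,\tau)\|\bigr)\cdot|\xi|\hat\psi$ (up to a bounded angular factor), i.e.\ a genuinely bounded parabolic-order-zero multiplier applied to the \emph{spatial} gradient, and it is $\nabla_x\psi\in L^{\infty}$ that closes the bound there. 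This is exactly why the cited proof targets the full $\mathbb{D}\psi$ rather than $D^{1/2}_t\psi$: the symbol $\|(\xi,\tau)\|$ splits into a time piece (reached from $\mathbb{D}_n\psi$) and a space piece (reached from $\nabla_x\psi$) by bounded parabolic Riesz-type multipliers, and both halves of Definition~\ref{parabolicLipschitz} are used in the principal term. To make your argument rigorous you would need to reorganise Step 2 along these lines rather than treating \eqref{horizontalLipschitz} as a tool only for absorbing cutoff tails.
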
 

A \emph{parabolic box} is any rectangle of the form $I_{1} \times I_{2} \subset \R^{n}$, where $I_{1} \subset \R^{n - 1}$ is a standard Euclidean cube of side-length $\ell(I_{1})$, and $I_{2} \subset \R$ is an interval of length $\ell(I_{2}) = |I_{2}| = \ell(I_{1})^{2}$. In the sequel, the space $\R^{n - 1} \times \R$ is equipped with the metric $d$,
\begin{displaymath} d((x,s),(y,t)) := |x - y| + |s - t|^{1/2}. \end{displaymath}
Metric concepts in the parabolic space $\R^{n - 1} \times \R$, such as balls and diameter, are defined using the metric $d$. It is proven in \cite{MR1330241} (or see \cite{MR1727344} for a more easily accessible reference) that parabolic Lipschitz functions are Lipschitz continuous with respect to the metric $d$:
\begin{equation}\label{lipschitz} |\psi(p) - \psi(q)| \lesssim d(p,q), \qquad p,q \in \R^{n}. \end{equation}
In particular, it follows that $\psi$ is $\tfrac{1}{2}$-H\"older in the $t$-variable. However, \eqref{lipschitz} is not a characterisation of parabolic Lipschitz functions: \eqref{DtCarleson} implies that $\psi$ is "often" a little better than $\tfrac{1}{2}$-H\"older in the $t$-variable. 

\begin{remark} The only information used about parabolic Lipschitz functions in this paper are the conditions \eqref{DtCarleson} and \eqref{lipschitz}. To be completely accurate, the Dorronsoro estimate will require \eqref{DtCarleson} plus the "horizontal" Lipschitz condition \eqref{horizontalLipschitz}; the "full" Lipschitz condition \eqref{lipschitz} will only be applied during the proof of the parabolic Rademacher theorem, Theorem \ref{rademacher}. The arguments below will be completely non-Fourier-analytic. Having said that, since the very definition of being parabolic Lipschitz contains the Fourier transform, literally nothing can be proven about these functions without some reference to Fourier analysis. The derivation of \eqref{DtCarleson} from Definition \ref{parabolicLipschitz} in \cite{2018arXiv180507270D} consists of verifying that the \emph{parabolic derivative} $\mathbb{D}\psi$ of $\psi$ is in BMO (as explained on \cite[p. 5-6]{2018arXiv180507270D}, this not quite the same object as $\mathbb{D}_{n}\psi$), and then inferring condition \eqref{DtCarleson} from the classical work of Strichartz \cite{MR578205} (as detailed in the appendix of \cite{2018arXiv180507270D}, more precisely \cite[(8.4)-(8.5)]{2018arXiv180507270D}). \end{remark}

To formulate Dorronsoro's theorem for parabolic Lipschitz functions, let $Q = I_{1} \times I_{2} \subset \R^{n - 1} \times \R$ be a parabolic box, let $\psi \colon \R^{n} \to \R$ be continuous, and let

\begin{equation}\label{AffCoeff} \beta_{p}(Q) := \inf_{A} \left( \frac{1}{\diam(Q)^{n + 1}} \int_{Q} \left(\frac{|\psi(x,t) - A(x)|^{p}}{\diam(Q)} \right)^{p} \, dx \, dt \right)^{1/p}, \quad 1 \leq p < \infty, \end{equation}
where the $\inf$ ranges over affine maps $A \colon \R^{n - 1} \to \R$. These are the parabolic analogues of the numbers $\beta_{p}(Q)$ discussed earlier in Euclidean space, and I make no distinction in the notation; only the parabolic coefficients will appear in the remainder of this document. For technical reasons, I also define an "$L$-restricted" version of the coefficient $\beta_{p}(Q)$, denoted $\beta^{L}_{p}(Q)$, where the "$\inf_{A}$" is only taken over affine maps $A \colon \R^{n - 1} \to \R$ of (Euclidean) Lipschitz constant at most $L \geq 1$. 

While the definition \eqref{AffCoeff} makes sense for any parabolic box $Q$, I will only use it for \emph{dyadic parabolic boxes}. More precisely, let $\calD^{n - 1}_{j}$ and $\calD^{1}_{j}$, $j \in \Z$, stand for the standard families of dyadic cubes and intervals in $\R^{n - 1}$ and $\R$, respectively, of side-length $2^{-j}$. Define $\calD_{j}$, $j \in \Z$, and $\calD$ to be the following families of parabolic boxes:
\begin{displaymath} \calD_{j} := \{I_{1} \times I_{2} : I_{1} \in \calD^{n - 1}_{j} \text{ and } I_{2} \in \calD^{1}_{2j}\} \quad \text{and} \quad \calD := \bigcup_{j \in \Z} \calD_{j}. \end{displaymath}

Here is the parabolic analogue of Dorronsoro's theorem:
\begin{thm}\label{dorronsoro} Let $\psi \colon \R^{n} \to \R$ be a parabolic Lipschitz function (more precisely, a function satisfying \eqref{horizontalLipschitz} and \eqref{DtCarleson}). Then, for any $C \geq 1$,
\begin{displaymath} \mathop{\sum_{Q \in \calD}}_{Q \subset Q_{0}} \beta_{2}(CQ)^{2}|Q| \lesssim_{C} |Q_{0}|, \qquad Q_{0} \in \calD. \end{displaymath}
Here $CQ = CI_{1} \times CI_{2}$ if $Q = I_{1} \times I_{2}$ is a parabolic box. Moreover, if $\psi$ is parabolic $L$-Lipschitz, then $\beta_{2}(CQ)$ above can be replaced by $\beta_{2}^{L}(CQ)$. \end{thm}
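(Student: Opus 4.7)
The plan is to reduce Theorem \ref{dorronsoro} to two ingredients: the Euclidean Dorronsoro estimate (Theorem \ref{main}) applied slicewise in time, and the Carleson condition \eqref{DtCarleson}. By \eqref{horizontalLipschitz}, each slice $\psi(\cdot, t) \colon \R^{n-1} \to \R$ is $L$-Lipschitz, so Theorem \ref{main} furnishes the packing bound
\[
\sum_{I \subset I_{1}^{0}} \gamma(CI, t)^{2}|I| \lesssim_{C} L|I_{1}^{0}|
\]
for each fixed $t$, where the sum ranges over Euclidean dyadic subcubes of $I_{1}^{0} \subset \R^{n-1}$ and
\[
\gamma(J, t) := \inf_{A}\left(\frac{1}{|J|}\int_{J}\left(\frac{|\psi(x,t) - A(x)|}{\ell(J)}\right)^{2} dx\right)^{1/2}
\]
is the Euclidean $\beta_{2}$-number of the slice $\psi(\cdot, t)$ on the Euclidean cube $J$. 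The condition \eqref{DtCarleson} will supply the time-regularity needed to compare slicewise approximations at nearby times.

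Given a dyadic parabolic box $Q = I_{1} \times I_{2}$ and $t \in CI_{2}$, let $A_{t}\colon \R^{n-1} \to \R$ be a (near-)minimizer for $\gamma(CI_{1},t)$. Since $\psi(\cdot,t)$ is $L$-Lipschitz, the least-squares $A_{t}$ has Lipschitz constant $\leq L$, so the affine map
\[
A_{Q}(x) := \fint_{CI_{2}} A_{t}(x)\, dt
\]
has Lipschitz constant $\leq L$ as well, hence is admissible for $\beta_{2}^{L}(CQ)$. Using $A_{Q}$ as a test function, together with the pointwise split $|\psi - A_{Q}|^{2} \lesssim |\psi - A_{t}|^{2} + |A_{t} - A_{Q}|^{2}$, Jensen's inequality $|A_{t}(x) - A_{Q}(x)|^{2} \leq \fint_{CI_{2}} |A_{t}(x) - A_{s}(x)|^{2}\, ds$, and the triangle bound $|A_{t} - A_{s}|^{2} \lesssim |A_{t} - \psi(\cdot,t)|^{2} + |\psi(\cdot,t) - \psi(\cdot,s)|^{2} + |\psi(\cdot,s) - A_{s}|^{2}$, all integrated over $x \in CI_{1}$ and $s, t \in CI_{2}$, yields a decomposition $\beta_{2}^{L}(CQ)^{2}|Q| \lesssim I(Q) + II(Q)$, with
\[
I(Q) \sim |I_{1}|\int_{CI_{2}}\gamma(CI_{1},t)^{2}\, dt, \qquad II(Q) \sim \frac{1}{|CI_{2}|^{2}}\int_{CI_{1}}\iint_{CI_{2} \times CI_{2}}|\psi(x,t) - \psi(x,s)|^{2}\, ds\, dt\, dx.
\]

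The spatial sum $\sum_{Q \subset Q_{0}} I(Q)$ is handled by the slicewise Theorem \ref{main}: at each dyadic scale the cubes $I_{1} \subset I_{1}^{0}$ tile $I_{1}^{0}$ and the intervals $I_{2} \subset I_{2}^{0}$ tile $I_{2}^{0}$ (with bounded overlap after the $C$-inflation), giving
\[
\sum_{Q \subset Q_{0}} I(Q) \lesssim_{C} \int_{CI_{2}^{0}} \sum_{I_{1} \subset I_{1}^{0}} |I_{1}|\gamma(CI_{1},t)^{2}\, dt \lesssim_{C} L|Q_{0}|.
\]
For the time-oscillation sum, Fubini turns $\sum_{Q \subset Q_{0}} II(Q)$ into an integral of $|\psi(x,t) - \psi(x,s)|^{2}$ weighted by
\[
W(x,s,t) := \sum_{Q \subset Q_{0}} \frac{1}{|CI_{2}|^{2}}\mathbf{1}_{\{x \in CI_{1},\, s,t \in CI_{2}\}}.
\]
A scale-counting argument gives $W \lesssim_{C} |s-t|^{-2}$ on $I_{1}^{0} \times I_{2}^{0} \times I_{2}^{0}$ (up to mild inflation): the condition $s, t \in CI_{2}$ forces $|CI_{2}| \gtrsim |s-t|$, there are $O_{C}(1)$ admissible $Q$ at each such scale, and the sum over those scales is geometric in $|CI_{2}|^{-2}$ and dominated by its finest contributing term. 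Applying \eqref{DtCarleson} to a slightly inflated parabolic box then yields $\sum_{Q \subset Q_{0}} II(Q) \lesssim_{C} L|Q_{0}|$.

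The main obstacle is engineering the decomposition so that all residual terms decouple cleanly into (a) slicewise Euclidean $\beta_{2}$-numbers, summable by Theorem \ref{main}, and (b) pure time-oscillations of $\psi$, summable by \eqref{DtCarleson}. Averaging the slicewise minimizers $A_{t}$ into the single affine map $A_{Q}$ is the device that makes this work. The scale-counting estimate $W \lesssim_{C} |s-t|^{-2}$ is the only genuinely parabolic step; the dyadic-scale constraint $\ell(I_{2}) = \ell(I_{1})^{2}$ restricts which time-scales appear but is immaterial for the bound.
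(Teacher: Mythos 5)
Your proof is correct and follows the same overall strategy as the paper: reduce to slicewise Euclidean Dorronsoro estimates in the horizontal variable plus the Carleson condition \eqref{DtCarleson} in the vertical variable, with the key device being to average the slicewise affine minimizers $A_{t}$ into a single test map $A_{Q}$. The paper packages this through two named intermediate coefficients, the horizontal affinity $A(Q)$ and the vertical oscillation $\osc(Q)$, proving separate Carleson packing estimates for each (Propositions \ref{affCarleson} and \ref{oscCarleson}) and then a pointwise comparison $\beta_{2}(Q)^{2} \lesssim A(Q)^{2} + \osc(Q)^{2}$ via the decomposition lemma Proposition \ref{AffApprox}. You inline the whole computation: your triangle-inequality chain through $|A_{t} - A_{s}|$ lands directly on the increments $|\psi(x,t) - \psi(x,s)|^{2}$ that appear in \eqref{DtCarleson}, bypassing $\osc(Q)$ entirely, and your scale-counting weight bound $W(x,s,t) \lesssim_{C} |s-t|^{-2}$ plays exactly the role of the change-of-variables-and-stack-over-scales computation carried out inside Proposition \ref{oscCarleson}. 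These are the same estimates in different packaging; your version is arguably tighter, at the cost of losing the reusable intermediate propositions. One small gap to flag: for the ``moreover'' clause you assert without justification that the unrestricted slicewise $L^{2}$-minimizer $A_{t}$ is automatically $L$-Lipschitz. This is not obvious, and the paper flags the issue explicitly in the proof of Proposition \ref{affCarleson} before offering two remedies. The clean fix, also used in the paper's Proposition \ref{AffApprox}, is to take $A_{t}$ to be the $L^{2}$-projection of $\psi(\cdot,t)$ onto the convex set of $L$-Lipschitz affine maps; this is still a near-minimizer by the $L$-restricted form of the Euclidean Dorronsoro estimate, and its average $A_{Q}$ is then manifestly $L$-Lipschitz.
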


\begin{remark}\label{hofmannCitation} Theorem \ref{dorronsoro} is originally due to Hofmann \cite{MR1484857}, see in particular \cite[(35)]{MR1484857}. The proof given after \cite[(35)]{MR1484857} is remarkably short, but it is Fourier-analytic and, more importantly, based on a "localization lemma", see \cite[Appendix, Lemma 2]{MR1484857}. The proof of the localization lemma is not elementary, as it assumes the $L^{2}$-boundedness of first order parabolic Calder\'on commutators of the form $\left[\sqrt{\bigtriangleup - \tfrac{\partial}{\partial_{t}}}, \psi\right]$, established earlier by Hofmann in \cite{MR1330241}. \end{remark}

\subsubsection{Rademacher's theorem for parabolic Lipschitz functions} Since a parabolic Lipschitz map $\psi \colon \R^{n} \to \R$ is Euclidean Lipschitz in the horizontal variables, one can apply Rademacher's theorem to the following end: for Lebesgue almost every $p = (x,t) \in \R^{n - 1} \times \R$, there exists a linear map $A_{p} \colon \R^{n - 1} \to \R$ such that 
\begin{displaymath} \frac{|\psi(y,t) - \psi(x,t) - A_{p}(y - x)|}{|x - y|} = \epsilon_{p}(|x - y|), \end{displaymath}
where $\epsilon_{p}(r) \to 0$ as $r \to 0$. Combined with (a suitable corollary of) Theorem \ref{dorronsoro}, the "horizontal differentiability" above can be upgraded to "full" differentiability in the following sense:
 \begin{thm}\label{rademacher} Let $\psi \colon \R^{n} \to \R$ be parabolic Lipschitz. Then, for Lebesgue almost every $p = (x,t) \in \R^{n - 1} \times \R$, there exists a linear map $A_{p} \colon \R^{n - 1} \to \R$ such that
 \begin{displaymath} \frac{|\psi(q) - \psi(p) - A_{p}(y - x)|}{d(p,q)} = \epsilon_{p}(d(p,q)), \end{displaymath}
 where $\epsilon_{p}(r) \to 0$ as $r \to 0$.
 \end{thm}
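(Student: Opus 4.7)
The plan combines horizontal Rademacher (slice by slice in $t$) to produce the candidate linear map $A_p$, with the parabolic Dorronsoro estimate of Theorem \ref{dorronsoro} used purely to control the temporal oscillation $\psi(y,s) - \psi(y,t)$ at small scales. The $L$-restricted version of $\beta_{2}$ in Theorem \ref{dorronsoro} is essential: it forces the near-minimising affine comparators to be uniformly Lipschitz, which makes the parabolic regularity of the associated defects independent of scale.

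The horizontal step is immediate: by \eqref{horizontalLipschitz} each slice $\psi(\cdot,t)$ is $L$-Lipschitz, so classical Rademacher gives a linear map $A_{(x,t)} \colon \R^{n-1} \to \R$ at $\calL^{n-1}$-a.e. $x$, and Fubini promotes this to $\calL^{n}$-a.e. $p = (x,t) \in \R^{n}$. At such a $p$, $\psi(y,t) - \psi(x,t) - A_{p}(y-x) = o(|y-x|)$ as $y \to x$, so the theorem reduces to showing that $|\psi(y,s) - \psi(y,t)| = o(d(p,q))$ as $q = (y,s) \to p$ at $\calL^{n}$-a.e. $p$. For this, I would apply Theorem \ref{dorronsoro} (with the $L$-restricted $\beta_{2}^{L}$) and a standard Fubini/Chebyshev argument to deduce that, at $\calL^{n}$-a.e. $p$, the numbers $\beta_{j} := \beta_{2}^{L}(CQ_{j}(p))$ attached to the parabolic dyadic boxes $Q_{j}(p) \in \calD_{j}$ containing $p$ satisfy $\sum_{j} \beta_{j}^{2} < \infty$; in particular $\beta_{j} \to 0$. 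Fix such a $p$, let $A_{j} \colon \R^{n-1} \to \R$ be a corresponding $L$-Lipschitz affine near-minimiser for $\beta_{2}^{L}(CQ_{j}(p))$, and set $g_{j}(y,s) := \psi(y,s) - A_{j}(y)$.

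The crucial observation is that $g_{j}$ is parabolic Lipschitz with constants independent of $j$: it is $2L$-Lipschitz in $y$ by \eqref{horizontalLipschitz} and the Lipschitz restriction on $A_{j}$, and $\tfrac{1}{2}$-H\"older in $s$ by \eqref{lipschitz}, since $A_{j}$ does not depend on $s$. Writing $r_{j} \sim 2^{-j}$ for the parabolic diameter of $CQ_{j}(p)$, the minimising property gives $\int_{CQ_{j}(p)} |g_{j}|^{2} \lesssim r_{j}^{n+3}\beta_{j}^{2}$. This $L^{2}$ information upgrades to a uniform pointwise bound: if $|g_{j}(q_{0})| = \alpha$ at an interior point $q_{0}$, then $|g_{j}| \geq \alpha/2$ on a parabolic ball around $q_{0}$ of radius $\sim \alpha$, which has $\calL^{n}$-measure $\sim \alpha^{n+1}$, so $\alpha^{n+3} \lesssim r_{j}^{n+3}\beta_{j}^{2}$ and hence $|g_{j}(q_{0})| \lesssim r_{j} \beta_{j}^{2/(n+3)}$. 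Taking $C \geq 1$ large enough in $CQ_{j}(p)$, this estimate will hold for every $q_{0}$ in the smaller parabolic ball $B(p, r_{j}/10)$ once $j$ is large.

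To conclude, given $q = (y,s)$ with $r = d(p,q)$ small, pick the scale $j$ so that $r_{j}/20 \leq r \leq r_{j}/10$; then $(y,s)$ and $(y,t)$ both lie in $B(p, r_{j}/10)$, so
\[ |\psi(y,s) - \psi(y,t)| = |g_{j}(y,s) - g_{j}(y,t)| \leq |g_{j}(y,s)| + |g_{j}(y,t)| \lesssim r_{j} \beta_{j}^{2/(n+3)} = o(r). \]
Combined with the horizontal differentiability from the first paragraph, this yields $|\psi(q) - \psi(p) - A_{p}(y-x)| = o(d(p,q))$, as required. The main obstacle is exactly this passage from $L^{2}$ Dorronsoro control to a uniform pointwise bound on $g_{j}$ near $p$: without the uniform parabolic Lipschitz constant of the $g_{j}$'s, $L^{2}$-smallness only controls $\psi - A_{j}$ on a subset of $CQ_{j}(p)$, which is useless for the pointwise $o$-statement. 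This is where the $L$-restricted $\beta_{2}^{L}$ plays an essential role, together with the parabolic Lipschitz regularity \eqref{lipschitz} of $\psi$.
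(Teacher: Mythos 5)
Your proposal is correct and follows essentially the same route as the paper: horizontal Rademacher plus Fubini for the candidate $A_p$, the Carleson estimate of Theorem \ref{dorronsoro} for square-summability of the local $\beta_2^L$'s, and the $L^2$-to-$L^\infty$ upgrade via the uniform parabolic Lipschitz constant of $\psi - A_j$ --- you re-derive this upgrade inline, whereas the paper isolates it as Lemma \ref{betaInftyVsBeta2} and Corollary \ref{betaInftyCarleson}. The only other difference is cosmetic: the paper compares the near-minimiser $A^Q$ directly with $A_p$ and bounds $|\psi(q)-\psi(p)-A_Q(y-x)|$ in one step, while you split off the vertical oscillation $\psi(y,s)-\psi(y,t)$ before invoking the horizontal differentiability.
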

 In other words, the linear map determined by the horizontal gradient automatically approximates $\psi$ also in the vertical direction, at least almost everywhere. This is an analogue of Rademacher's theorem for parabolic Lipschitz functions.

 \subsection{Some auxiliary coefficients} The proof of Theorem \ref{dorronsoro} follows the same idea as the proof of Theorem \ref{main}. Again, one introduces a pair of auxiliary (and "lower dimensional") coefficients, see below, for which, individually, Theorem \ref{dorronsoro} is straightforward. Then, as before, the main trick is to recombine the information from the auxiliary coefficients to say something useful about the numbers $\beta_{2}(Q)$.
 
I now introduce the auxiliary coefficients, defined for continuous functions $\psi \colon \R^{n} \to \R$, and parabolic (not necessarily dyadic) boxes $Q = I_{1} \times I_{2} \subset \R^{n - 1} \times \R$. First define the \emph{horizontal affinity coefficient}
\begin{equation}\label{HAffCoeff} A(Q) := \left( \fint_{I_{2}} \left[ \inf_{A_{t}} \fint_{I_{1}} \left( \frac{|\psi(x,t) - A_{Q,t}(x)|}{\diam(I_{1})} \right)^{2} \, dx \right] \, dt \right)^{1/2}, \end{equation}
where the $\inf$ runs over all affine maps $A_{t} \colon \R^{n - 1} \to \R^{n}$. The reader may note that $A(Q)$ could be re-written as an $L^{2}$ average over the Euclidean $\beta_{2}(I_{1})$ coefficients associated to the maps $x \mapsto \psi(x,t)$. Next, define the \emph{vertical oscillation coefficient}
\begin{equation}\label{VOscCoeff} \osc(Q) := \left( \fint_{I_{1}} \left[ \inf_{c_{x}} \fint_{I_{2}} \frac{|\psi(x,t) - c|^{2}}{|I_{2}|} \, dt \right] \, dx \right)^{1/2}, \end{equation}
where the $\inf$ runs over constants $c_{x} \in \R$. This coefficient measures, how much better than $\tfrac{1}{2}$-H\"older the the map $\psi$ is in the $t$-variable, inside the box $Q$.

I also define the $L$-restricted version $A^{L}(Q)$, where the "$\inf_{A_{t}}$" only runs over affine maps $A_{t} \colon \R^{n - 1} \to \R$ with Lipschitz constant bounded from above by $L \geq 1$. Now, the proof of Theorem \ref{dorronsoro} proceeds as follows: one first establishes suitable estimates for the horizontal affinity and vertical oscillation coefficients separately, using Dorronsoro's theorem in $\R^{n}$ and the condition \eqref{DtCarleson}, respectively. Finally, one verifies that
\begin{displaymath} \beta_{2}(Q)^{2} \lesssim A(Q)^{2} + \osc(Q)^{2} \end{displaymath} 
for all parabolic boxes $Q \subset \R^{n}$. Combining these pieces gives Theorem \ref{dorronsoro}.

\subsection{Estimates for the auxiliary coefficients} I start with the vertical oscillation coefficients:

\begin{proposition}\label{oscCarleson} If $\psi \colon \R^{n} \to \R$ is continuous and satisfies \eqref{DtCarleson}, then, for any $C \geq 1$,
\begin{displaymath} \sum_{Q \subset Q_{0}} \osc^{2}(CQ)|Q| \lesssim_{C} C_{L}|Q_{0}|, \qquad Q_{0} \in \calD. \end{displaymath}
Here $C_{L}$ is the constant in \eqref{DtCarleson}.
\end{proposition}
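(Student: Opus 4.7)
The plan is to open up the definition of $\osc(CQ)$ via the variance identity, swap the Carleson sum with the resulting triple integral via Fubini, and verify that the resulting dyadic sum of weights collapses to the singular kernel $|s-t|^{-2}$ appearing in \eqref{DtCarleson}.

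First, for each parabolic box $CQ = CI_{1} \times CI_{2}$, choosing $c_{x} = \fint_{CI_{2}} \psi(x,\tau) \, d\tau$ to minimise the inner average in \eqref{VOscCoeff} and applying the elementary variance identity $\inf_{c} \fint_{I} |f - c|^{2} = \tfrac{1}{2} \fint_{I} \fint_{I} |f(s) - f(t)|^{2} \, ds \, dt$ yields
\[
\osc^{2}(CQ) = \frac{1}{2|CI_{2}|} \fint_{CI_{1}} \fint_{CI_{2}} \fint_{CI_{2}} |\psi(x,t) - \psi(x,s)|^{2} \, ds \, dt \, dx.
\]
Multiplying by $|Q|$ and absorbing the $C$-dilation factors into an implicit constant gives
\[
\osc^{2}(CQ)\,|Q| \lesssim_{C} \frac{1}{|I_{2}|^{2}} \int_{CI_{1}} \iint_{CI_{2} \times CI_{2}} |\psi(x,t) - \psi(x,s)|^{2} \, ds \, dt \, dx.
\]

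Next I would write $Q_{0} = I_{1}^{(0)} \times I_{2}^{(0)}$, sum over $Q = I_{1} \times I_{2} \in \calD$ with $Q \subset Q_{0}$, and invoke Fubini. The result is a triple integral of $|\psi(x,t) - \psi(x,s)|^{2}$ over $CI_{1}^{(0)} \times CI_{2}^{(0)} \times CI_{2}^{(0)}$ against the weight
\[
W(x,s,t) := \sum_{\substack{Q \in \calD \\ Q \subset Q_{0}}} \frac{\mathbf{1}_{CI_{1}}(x) \, \mathbf{1}_{CI_{2}}(s) \, \mathbf{1}_{CI_{2}}(t)}{|I_{2}|^{2}}.
\]

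The main step is the pointwise estimate $W(x,s,t) \lesssim_{C} |s-t|^{-2}$ on the support. Fix $(x,s,t)$: at each dyadic scale $j$ there are $O_{C}(1)$ choices of $I_{1} \in \calD^{n-1}_{j}$ with $x \in CI_{1}$ and $O_{C}(1)$ choices of $I_{2} \in \calD^{1}_{2j}$ with $s, t \in CI_{2}$, and the latter forces $2^{-2j} \gtrsim_{C} |s-t|$. Since $|I_{2}|^{-2} = 2^{4j}$, summing the admissible geometric series over $j$ gives $W \lesssim_{C} |s-t|^{-2}$. Plugging this in and applying \eqref{DtCarleson} to any parabolic box of measure $\sim_{C} |Q_{0}|$ containing $CQ_{0}$ produces
\[
\sum_{\substack{Q \in \calD \\ Q \subset Q_{0}}} \osc^{2}(CQ)\,|Q| \lesssim_{C} \int_{CI_{1}^{(0)}} \iint_{CI_{2}^{(0)} \times CI_{2}^{(0)}} \frac{|\psi(x,t) - \psi(x,s)|^{2}}{|s-t|^{2}} \, ds \, dt \, dx \lesssim_{C} C_{L}|Q_{0}|,
\]
as required. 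The only real obstacle is bookkeeping the $C$-dependent constants in the geometric-series estimate for $W$; the remainder is a routine application of Fubini and the variance identity.
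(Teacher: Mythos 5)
Your proof is correct, and it takes a route that is genuinely different from the paper's in one key respect: the choice of comparison constant. You use the variance identity, i.e.\ compare $\psi_x$ on $CI_2$ with its own mean over $CI_2$, which produces a double integral over $CI_2 \times CI_2$ and includes all pairs $(s,t)$, in particular those with $|s-t| \ll |I_2|$. To compensate you then sum over scales, interchange with Fubini, and bound the resulting weight $W(x,s,t)$ pointwise by $|s-t|^{-2}$ via a geometric series dominated by the finest admissible scale $2^{-2j} \sim_C |s-t|$. The paper instead compares with the mean over a \emph{shifted} interval $\hat I = I + 2C|I|$, so that every pair $(t,s) \in CI \times \hat I$ automatically satisfies $|s-t| \sim_C |I|$; after the change of variables $s = t + r$ the contribution of scale $j$ lives in the annulus $r \sim 2^{-2j}$, and the sum over $j$ reproduces the kernel $|s-t|^{-2}$ directly because the annuli have bounded overlap, with no separate weight estimate needed. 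Both are elementary and short; yours uses the canonical (variance) comparison constant at the cost of an extra Fubini-plus-geometric-series step, while the paper's shifted-interval trick eliminates that step by building the annular localization into the choice of $c_I$. One minor point worth making explicit in your write-up: when you apply \eqref{DtCarleson} at the end, $CI_1^{(0)} \times CI_2^{(0)}$ is not itself a parabolic box (the two sides are dilated by the same factor $C$, breaking the $\ell(I_1)^2 = |I_2|$ relation), so, as you note, you should pass to a genuine parabolic box of comparable measure containing it --- e.g.\ $CI_1^{(0)} \times C^2 I_2^{(0)}$.
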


\begin{proof} Fix $C \geq 1$. Then, fix $x \in \R^{n - 1}$ and consider the map $\psi_{x}(t) = \psi(x,t)$. Let $Q_{0} = I_{1}^{0} \times I_{2}^{0} \in \calD$ with
\begin{displaymath} \ell(I_{1}^{0}) = 2^{-j_{0}} \quad \text{and} \quad |I_{2}^{0}| = 2^{-2j_{0}}. \end{displaymath}
 For dyadic intervals $I \subset I_{2}^{0}$, let $\hat{I} = I + 2C|I| = \{t \in \R : t - 2C|I| \in I\}$, and write
\begin{displaymath} c_{I} := \fint_{\hat{I}} \psi(s) \, ds. \end{displaymath}
Estimate as follows:
\begin{equation}\label{form1a} \inf_{c} \int_{CI} \frac{|\psi_{x}(t) - c|^{2}}{|CI|} \, dt \leq \int_{CI} \frac{|\psi_{x}(t) - c_{I}|^{2}}{|CI|} \, dt \leq \fint_{\hat{I}} \int_{CI} \frac{|\psi_{x}(t) - \psi_{x}(s)|^{2}}{|CI|} \, dt \, ds. \end{equation} 
Next, note that whenever $t \in CI$ and $s \in \hat{I}$, then 
\begin{displaymath} c_{1}|I| \leq r := s - t \leq C_{1}|I| \end{displaymath}
for some $c_{1},C_{2} \sim_{C} 1$. Hence, after the change of variable $s \mapsto t + r$, and noting that $|\hat{I}| = |I|$, one gets
\begin{displaymath} \eqref{form1a} \lesssim_{C} \int_{c_{1}|I|}^{C_{1}|I|} \int_{CI} \frac{|\psi_{x}(t) - \psi_{x}(t + r)|^{2}}{|I|^{2}} \, dt \, dr. \end{displaymath}
For $j \geq j_{0}$ fixed, it follows that
\begin{align} \mathop{\sum_{I \in \calD^{1}_{2j}}}_{I \subset I_{2}^{0}} \inf_{c_{I}} \int_{CI} \frac{|\psi_{x}(t) - c_{I}|^{2}}{|CI|} \, dt & \lesssim_{C} \int_{c_{1}2^{-2j}}^{C_{1}2^{-2j}} \mathop{\sum_{I \in \calD^{1}_{2j}}}_{I \subset I_{2}^{0}} \int_{CI} \frac{|\psi_{x}(t) - \psi_{x}(t + r)|^{2}}{|I|^{2}} \, dt \, dr \notag\\
&\label{form2a} \lesssim_{C} \int_{c_{1}2^{-2j}}^{C_{1}2^{-2j}} \int_{CI_{2}^{0}} \frac{|\psi_{x}(t) - \psi_{x}(t + r)|^{2}}{r^{2}} \, dt \, dr. \end{align}
To complete the proof, start by writing
\begin{align*} \sum_{Q \subset Q_{0}} \osc(CQ)^{2}|Q| & = \mathop{\sum_{I_{1} \times I_{2} \in \calD}}_{I_{1} \times I_{2} \subset I_{1}^{0} \times I_{2}^{0}} \left( \fint_{CI_{1}} \left[ \inf_{c_{Q,x}} \fint_{CI_{2}} \frac{|\psi(x,t) - c_{Q,x}|^{2}}{|CI_{2}|} \, dt \right]  \, dx \right) |Q|\\
& \sim_{C} \sum_{j \geq j_{0}} \mathop{\sum_{I_{1} \in \calD^{n - 1}_{j}}}_{I_{1} \subset I_{1}^{0}} \mathop{\sum_{I_{2} \in \calD^{1}_{2j}}}_{I_{2} \subset I_{2}^{0}} \left( \int_{CI_{1}} \left[ \inf_{c_{Q,x}} \int_{CI_{2}} \frac{|\psi(x,t) - c_{Q,x}|^{2}}{|CI_{2}|} \, dt \right]  \, dx \right)\\
& = \sum_{j \geq j_{0}} \int_{CI_{1}^{0}} \mathop{\sum_{I_{1} \in \calD^{n - 1}_{j}}}_{x \in CI_{1}} \mathop{\sum_{I_{2} \in \calD^{1}_{2j}}}_{I_{2} \subset I_{2}^{0}} \left[ \inf_{c_{Q,x}} \int_{CI_{2}} \frac{|\psi_{x}(t) - c_{Q,x}|^{2}}{|CI_{2}|} \, dt \right] \, dx. \end{align*}
For $j \geq j_{0}$ and $x \in CI_{1}^{0}$ fixed, the innermost sum can next be estimated by applying \eqref{form2a}. Since moreover $\card\{I_{1} \in \calD_{j}^{n - 1} : x \in CI_{1}\} \lesssim_{C} 1$ for any $j \geq j_{0}$ fixed, one obtains
\begin{align*} \sum_{Q \subset Q_{0}} \osc(CQ)^{2}|Q| & \lesssim_{C} \sum_{j \geq j_{0}} \int_{CI_{1}^{0}} \int_{c_{1}2^{-2j}}^{C_{1}2^{-2j}} \int_{CI_{0}^{2}} \frac{|\psi_{x}(t) - \psi_{x}(t + r)|^{2}}{r^{2}} \, dt \, dr \, dx\\
& \sim_{C} \int_{CI_{1}^{0}} \int_{0}^{C_{1}|I_{0}^{2}|} \int_{CI_{2}^{0}} \frac{|\psi_{x}(t) - \psi_{x}(t + r)|^{2}}{r^{2}} \, dt \, dr \, dx\\
& \leq \int_{CI_{1}^{0}} \int_{CI_{2}^{0}} \int_{C_{2}I_{2}^{0}} \frac{|\psi(x,t) - \psi(x,s)|^{2}}{|t - s|^{2}} \, ds \, dt \, dx \lesssim_{C} C_{L}|Q_{0}|, \end{align*}
using \eqref{DtCarleson} in the last inequality; on the last line also $C_{2} \sim C + C_{2}$ is a constant. This completes the proof. \end{proof}

Next follows a similar estimate for the horizontal affinity coefficients:

\begin{proposition}\label{affCarleson} If $\psi \colon \R^{n} \to \R$ is continuous and satisfies \eqref{horizontalLipschitz} for some $L \geq 1$, then, for any $C \geq 1$,
\begin{displaymath} \mathop{\sum_{Q \in \calD}}_{Q \subset Q_{0}} A^{L}(CQ)^{2}|Q| \lesssim_{C} L|Q_{0}|, \qquad Q_{0} \in \calD. \end{displaymath}
\end{proposition}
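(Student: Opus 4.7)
The plan is to view the coefficient $A^{L}(CQ)$ as a vertical average of Euclidean (horizontal) $\beta_{2}^{L}$-coefficients of the time-slices $\psi_{t}(x) := \psi(x,t)$, and then to invoke Theorem \ref{main} in $\R^{n-1}$ slice-by-slice. Indeed, \eqref{horizontalLipschitz} guarantees that $\psi_{t}$ is an $L$-Lipschitz function on $\R^{n-1}$ for every $t \in \R$, so the horizontal direction already "feels Euclidean" and no new ingredients beyond Theorem \ref{main}, Fubini, and a bounded-overlap argument are needed. In particular, unlike the proof of Proposition \ref{oscCarleson}, no reparametrisation or reference to \eqref{DtCarleson} is required.

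Writing $Q = I_{1} \times I_{2}$ with $\ell(I_{1}) = 2^{-j}$ and $|I_{2}| = 2^{-2j}$, and using $\diam(CI_{1}) \sim_{C} \ell(I_{1})$ together with $|CI_{2}| \sim_{C} |I_{2}|$, I would first unfold the definition of $A^{L}(CQ)$ from \eqref{HAffCoeff} to obtain
\begin{equation*}
A^{L}(CQ)^{2} |Q| \sim_{C} |I_{1}| \int_{CI_{2}} \beta_{2}^{L}(CI_{1}, \psi_{t})^{2} \, dt,
\end{equation*}
where $\beta_{2}^{L}(CI_{1}, \psi_{t})$ is the $L$-restricted Euclidean $\beta_{2}$-coefficient of $\psi_{t}$ on the cube $CI_{1} \subset \R^{n-1}$. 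For each fixed scale $j \geq j_{0}$, the intervals $\{CI_{2} : I_{2} \in \calD_{2j}^{1}, \, I_{2} \subset I_{2}^{0}\}$ have bounded ($C$-dependent) overlap and are contained in $C' I_{2}^{0}$ for some $C' \sim C$, so the sum over $I_{2}$ converts $\sum_{I_{2}} \int_{CI_{2}}$ into $\int_{C' I_{2}^{0}}$ with a constant loss. After swapping the remaining sum over $I_{1}$ with the time integral by Fubini, this yields
\begin{equation*}
\mathop{\sum_{Q \in \calD}}_{Q \subset Q_{0}} A^{L}(CQ)^{2} |Q| \lesssim_{C} \int_{C' I_{2}^{0}} \mathop{\sum_{I_{1} \in \calD^{n-1}}}_{I_{1} \subset I_{1}^{0}} \beta_{2}^{L}(CI_{1}, \psi_{t})^{2} |I_{1}| \, dt.
\end{equation*}

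For each fixed $t \in C' I_{2}^{0}$, the inner sum is then handled by applying Theorem \ref{main} (with $p = 2$, in dimension $n-1$) to the $L$-Lipschitz function $\psi_{t} \colon \R^{n-1} \to \R$, which bounds it by $\lesssim_{C} L |I_{1}^{0}|$. Integrating over $t$ produces $L |I_{1}^{0}| \cdot |C' I_{2}^{0}| \sim_{C} L |Q_{0}|$, which is the desired estimate.

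The one minor technical point — and arguably the only place where one has to think — is the distinction between the unrestricted $\beta_{2}$ used in the statement of Theorem \ref{main} and the restricted $\beta_{2}^{L}$ appearing here. For any $L$-Lipschitz $f \colon \R^{n-1} \to \R$ and any cube $I$, the orthogonality conditions defining the $L^{2}$-optimal affine approximant $A$ yield an explicit integral formula for $\nabla A$ of the form $c_{n}\, \ell(I)^{-2} \fint_{I} (f(x) - \fint_{I} f)(x - x_{I}) \, dx$, and the Lipschitz hypothesis then gives $|\nabla A| \lesssim L$. In particular, the unrestricted minimizer is automatically $C_{n} L$-Lipschitz, so $\beta_{2}^{C_{n} L}(CI_{1}, \psi_{t}) = \beta_{2}(CI_{1}, \psi_{t})$; the universal factor $C_{n}$ is then absorbed into the $\lesssim_{C}$ of the final estimate, allowing one to use Theorem \ref{main} effectively in its $L$-restricted form.
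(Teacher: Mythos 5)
Your argument follows the paper's proof essentially step for step: unfold $A^{L}(CQ)^{2}|Q|$ as a $t$-integral of horizontal $\beta_{2}$-coefficients of the slices $\psi^{t}$, use bounded overlap of $\{CI_{2}\}$ at each scale to turn $\sum_{I_{2}}\int_{CI_{2}}$ into a single integral over (a dilate of) $I_{2}^{0}$, and then apply Theorem~\ref{main} in $\R^{n-1}$ to each slice. The only place you diverge from the paper is the treatment of the $L$-restriction, and there is a small wrinkle worth flagging. Your explicit formula for the gradient of the $L^{2}$-optimal affine fit on a cube $I \subset \R^{n-1}$,
\begin{displaymath}
\nabla A \sim \ell(I)^{-2}\fint_{I}\Big(f - \fint_{I}f\Big)(x - x_{I})\,dx,
\end{displaymath}
only yields $|\nabla A| \leq C_{n}L$ with a dimensional constant $C_{n} > 1$ (the componentwise bound $|a_{k}| \leq L$ gives $|a| \leq \sqrt{n-1}\,L$), so what you actually establish is $\beta_{2}^{C_{n}L} = \beta_{2}$, and hence a Carleson bound for $A^{C_{n}L}(CQ)$ rather than $A^{L}(CQ)$. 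Saying the factor $C_{n}$ is "absorbed into $\lesssim_{C}$" glosses over the fact that $C_{n}$ sits inside the restriction class of the infimum, not as a multiplicative constant: $A^{L} \geq A^{C_{n}L}$, so you cannot pass from one to the other for free. The paper sidesteps this by citing a construction (Azzam's explicit affine approximant, or its own from Section~\ref{conclusion}) for which $\|\nabla A\|_{\infty} \leq \|\nabla\psi^{t}\|_{\infty} \leq L$ exactly. In practice this discrepancy is harmless — the downstream uses of the $L$-restricted coefficients (Theorem~\ref{dorronsoro}, Lemma~\ref{betaInftyVsBeta2}, Corollary~\ref{betaInftyCarleson}) are all insensitive to replacing $L$ by $C_{n}L$ — but as a proof of the proposition exactly as stated, you should either sharpen the estimate $|\nabla A| \leq L$ (which requires a reflection argument and is delicate on a cube for non-coordinate directions) or simply state and use the $C_{n}L$-restricted version.
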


\begin{proof} As in the previous proof, write $Q_{0} =: I_{1}^{0} \times I_{2}^{0} \in \calD$, and
\begin{displaymath} \ell(I_{1}^{0}) =: 2^{-j_{0}} \quad \text{and} \quad |I_{2}^{0}| =: 2^{-2j_{0}}. \end{displaymath}
The proposition is an application of Dorronsoro's theorem. If $t \in \R$ is fixed, then the map $\psi^{t} \colon \R^{n - 1} \to \R$ defined by $\psi^{t}(x) := \psi(x,t)$ is $L$-Lipschitz, uniformly in $t \in \R$, so Theorem \ref{main} implies that
\begin{equation}\label{dor} \mathop{\sum_{I \in \calD^{n - 1}}}_{I \subset I_{1}^{0}} \left( \inf_{A_{I}} \fint_{CI} \left( \frac{|\psi^{t}(x) - A_{I}(x)|}{\diam(CI)} \right)^{2} \, dx \right)|I| \lesssim_{C} L|I_{1}^{0}| \end{equation}
for any $C \geq 1$. Here $\inf$ runs over $L$-Lipschitz affine functions $A_{I} \colon \R^{n - 1} \to \R$: Theorem \ref{main} does not explicitly mention that the affine maps can be taken $L$-Lipschitz, but one can see this in various ways. For example, one can recall how the map $A$ was constructed in Section \ref{conclusion}, and see that its Lipschitz constant is no larger then the Lipschitz constant of $\psi^{t}$. For another proof, see \cite[Section 7.3]{MR3512428} where the form of the affine approximation is explicit, see \cite[p. 645]{MR3512428}, and $\|\nabla_{x} A_{I}\|_{\infty} \leq \|\nabla_{x} \psi^{t}\|_{\infty}$.

Now, to prove prove the proposition, first expand as follows:
\begin{align*} \mathop{\sum_{Q \in \calD}}_{Q \subset Q_{0}} A^{L}(CQ)^{2}|Q| & = \mathop{\sum_{I_{1} \times I_{2} \in \calD}}_{I_{1} \times I_{2} \subset I_{1}^{0} \times I_{2}^{0}} \left( \fint_{CI_{2}} \left[ \inf_{A_{Q,t}} \fint_{CI_{1}} \left( \frac{|\psi^{t}(x) - A_{Q,t}(x)|}{\diam(CI_{1})} \right)^{2} \, dx \right] \, dt \right) |Q| \\
& \sim_{C} \sum_{j \geq j_{0}} \mathop{\sum_{I_{2} \in \calD_{2j}^{1}}}_{I_{2} \subset I_{2}^{0}} \mathop{\sum_{I_{1} \in \calD^{n - 1}_{j}}}_{I_{1} \subset I_{1}^{0}} \left( \int_{CI_{2}} \left[ \inf_{A_{Q,t}} \int_{CI_{1}} \left( \frac{|\psi^{t}(x) - A_{Q,t}(x)|}{\diam(CI_{1})} \right)^{2} \, dx \right] \, dt \right) \\
& \leq \sum_{j \geq j_{0}} \int_{CI_{2}^{0}} \mathop{\sum_{I_{2} \in \calD^{1}_{2j}}}_{t \in CI_{2}} \bigg( \mathop{\sum_{I_{1} \in \calD^{n - 1}_{j}}}_{I_{1} \subset I_{1}^{0}} \left[ \inf_{A_{Q,t}} \int_{CI_{1}} \left( \frac{|\psi^{t}(x) - A_{Q,t}(x)|}{\diam(CI_{1})} \right)^{2} \, dx \right] \, \bigg) dt. \end{align*}
Note that the expression in brackets is independent of the choice of $I_{2}$ in the middle summation. Hence, writing $N_{j}(t) := \card\{I_{2} \in \calD_{2j}^{1} : t \in CI_{2}\}$, one notes that $N_{j}(t) \lesssim_{C} 1$ uniformly for $j \geq j_{0}$ and $t \in \R$ fixed, and the previous display can be continued as follows:
\begin{align*}  \ldots = & \sum_{j \geq j_{0}} \int_{CI_{2}^{0}} N_{j}(t) \mathop{\sum_{I_{1} \in \calD^{n - 1}_{j}}}_{I_{1} \subset I_{1}^{0}} \left[ \inf_{A_{Q,t}} \int_{CI_{1}} \left(\frac{|\psi^{t}(x) - A_{Q,t}(x)|}{\diam(CI_{1})} \right)^{2} \, dx \right] \, dt\\
& \lesssim_{C} \sum_{j \geq j_{0}} \int_{CI_{2}^{0}} \mathop{\sum_{I_{1} \in \calD^{n - 1}_{j}}}_{I_{1} \subset I_{1}^{0}} \left[ \inf_{A_{Q,t}} \int_{CI_{1}} \left( \frac{|\psi^{t}(x) - A_{Q,t}(x)|}{\diam(CI_{1})} \right)^{2} \, dx \right] \, dt\\
& = \int_{CI_{2}^{0}} \mathop{\sum_{I_{1} \in \calD_{j}^{n - 1}}}_{I_{1} \subset I_{1}^{0}} \left[ \inf_{A_{Q,t}} \int_{CI_{1}} \left( \frac{|\psi^{t}(x) - A_{Q,t}(x)|}{\diam(CI_{1})} \right)^{2} \, dx \right] \, dt \stackrel{\eqref{dor}}{\lesssim_{C}} L\int_{CI_{2}^{0}} |I_{1}^{0}| \, dt \sim_{C} L|Q_{0}|. \end{align*} 
This completes the proof. \end{proof}

\subsection{Proof of the Dorronsoro estimate}
This section contains a proof of the inequality
\begin{equation}\label{form10} \beta(Q)^{2} \lesssim A(Q)^{2} + \osc(Q)^{2} \end{equation}
for any parabolic box $Q \subset \R^{n}$, and the conclusion of the proof of Theorem \ref{dorronsoro}. The "parabolicity" of the rectangle $Q$ plays no role in \eqref{form10}, so I formulate the following proposition:

\begin{proposition}\label{AffApprox} Let $I_{1} \subset \R^{n - 1}$ be a cube, and let $I_{2} \subset \R$ be an interval. Assume that $f \colon Q := I_{1} \times I_{2} \to \R$ is a bounded continuous function. For $x \in \R^{n - 1}$ and $t \in \R$ fixed, write 
\begin{displaymath} f^{t}(x) = f(x,y) \quad \text{and} \quad f_{x}(t) = f(x,y). \end{displaymath}
Define 
\begin{equation}\label{HAff} \beta := \fint_{I_{2}} \inf_{A_{t}} \fint_{I_{1}} |f^{t}(x) - A_{t}(x)|^{2} \, dx \, dt, \end{equation}
and
\begin{displaymath} \beta := \fint_{I_{1}} \inf_{c_{x}} \fint_{I_{2}} |f_{x}(t) - c_{x}|^{2} \, dt \, dx, \end{displaymath} 
where $A_{t}$ runs over affine maps $\R^{n - 1} \to \R$, and $c_{x}$ runs over $\R$. Then, there exists an affine map $A \colon \R^{n - 1} \to \R$ such that
\begin{displaymath} \fint_{Q} |f(x,t) - A(x)|^{2} \, dx \, dt \lesssim \beta + \beta. \end{displaymath}
If moreover the maps $A_{t}$ are $L$-Lipschitz, then also $A$ is $L$-Lipschitz.
\end{proposition}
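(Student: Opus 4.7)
The natural candidate for the global affine map is the $t$-average of slicewise $L^2$-best affine approximations. The cost of this averaging will turn out to be exactly controlled by the vertical oscillation quantity.

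\textbf{Step 1 (choice of the slicewise minimisers).} For each $t \in I_2$, let $A_t \colon \R^{n-1} \to \R$ be the unique affine $L^2$-minimiser of $A \mapsto \int_{I_1} |f^t(x) - A(x)|^2 \, dx$; in the restricted case, minimise instead over the closed convex subset of affine maps with Lipschitz constant at most $L$. Since $f$ is bounded and continuous, $t \mapsto f^t$ is continuous $I_2 \to L^2(I_1)$, and orthogonal projection onto a closed convex set in a finite-dimensional Hilbert space is continuous, so $t \mapsto A_t$ is a continuous (hence Borel) map into the space of affine polynomials on $\R^{n-1}$. Define the candidate global approximation by
\[ A(x) := \fint_{I_2} A_t(x) \, dt, \qquad x \in \R^{n-1}. \]
As a pointwise average of affine maps, $A$ is affine; as an average of $L$-Lipschitz maps, it has Lipschitz constant at most $L$ when each $A_t$ does.

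\textbf{Step 2 (splitting the error).} The triangle inequality in $L^2(Q)$ gives
\[ \fint_Q |f(x,t) - A(x)|^2 \, dx \, dt \lesssim \fint_Q |f(x,t) - A_t(x)|^2 \, dx \, dt + \fint_Q |A_t(x) - A(x)|^2 \, dx \, dt. \]
By the defining property of $A_t$, the first summand equals the horizontal affinity quantity $\fint_{I_2} \inf_{A_t} \fint_{I_1} |f^t - A_t|^2 \, dx \, dt$ appearing in \eqref{HAff}.

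\textbf{Step 3 (drift between slices).} Writing $A_t(x) - A(x) = \fint_{I_2} [A_t(x) - A_s(x)] \, ds$ and applying Jensen,
\[ \fint_Q |A_t(x) - A(x)|^2 \, dx \, dt \leq \fint_{I_1} \fint_{I_2} \fint_{I_2} |A_t(x) - A_s(x)|^2 \, ds \, dt \, dx. \]
Route through the values of $f$ via
\[ |A_t(x) - A_s(x)| \leq |A_t(x) - f(x,t)| + |f_x(t) - f_x(s)| + |f(x,s) - A_s(x)|; \]
after squaring and integrating, each of the two outer terms integrates to the horizontal affinity quantity, and the middle one gives
\[ \fint_{I_1} \fint_{I_2} \fint_{I_2} |f_x(t) - f_x(s)|^2 \, ds \, dt \, dx = 2 \fint_{I_1} \inf_{c_x} \fint_{I_2} |f_x(t) - c_x|^2 \, dt \, dx, \]
by the classical identity $\fint \fint |g(t) - g(s)|^2 \, ds \, dt = 2 \inf_{c} \fint |g - c|^2$ (minimisation occurs at the mean). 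This is exactly twice the vertical oscillation quantity on the right-hand side of the proposition.

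\textbf{Main obstacle.} There is no deep difficulty: after the reduction to averaging, the rest is a triangle inequality, Jensen, and Fubini. The only subtle point is the measurable selection of $t \mapsto A_t$, which is handled by the continuity of the $L^2$-projection onto a finite-dimensional closed convex set. The Lipschitz preservation comes for free from averaging affine maps. The conceptual content is thus the observation that averaging the slicewise affine minimisers costs precisely the mean-square transverse oscillation of $f$ along the averaging direction.
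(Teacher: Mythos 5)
Your proof is correct and follows essentially the same route as the paper: define $A$ as the $t$-average of the slicewise $L^2$-minimisers $A_t$, split the error by the triangle inequality, and control the drift term $\fint_Q|A_t-A|^2$ by bounding the variance of $t\mapsto A_t(x)$ via a triangle inequality through $f$. The only cosmetic difference is in the last step: the paper substitutes the best constant $c_x$ and invokes ``the mean minimises $L^2$ deviation,'' whereas you apply Jensen to $A(x)=\fint A_s(x)\,ds$, route through $|f_x(t)-f_x(s)|$, and use the equivalent variance identity $\fint\fint|g(t)-g(s)|^2\,ds\,dt = 2\inf_c\fint|g-c|^2$ — the same elementary fact in two guises.
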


\begin{proof} For each $t \in I_{2}$, select an affine map $A_{t}(x) = a_{t} \cdot x + b_{t}$ with $a_{t} \in \R^{n - 1}$ and $b_{t} \in \R$ minimising 
\begin{displaymath} \fint_{I_{1}} |f^{t}(x) - A_{t}(x)|^{2} \, dx. \end{displaymath}
The minimiser exists as an $L^{2}$-projection, and the maps $t \mapsto a_{t}$ and $t \mapsto b_{t}$ are continuous and bounded: to see this, note that $t \mapsto f^{t}$ is a continuous map from $I_{2}$ to $L^{2}(I_{1})$, and the $L^{2}$-projection is further a continuous map from $L^{2}(I_{1})$ to the finite-dimensional subspace of affine functions, where $|a_{t}| + |b_{t}| \sim_{I_{1}} \|A_{t}\|_{2}$. These claims continue to hold if $A_{t}$ is restricted to being $L$-Lipschitz (noting that the property of being $L$-Lipschitz defines a convex set of affine maps, so the projection is well-defined). Further,
\begin{displaymath} \fint_{I_{2}} \fint_{I_{1}} |f^{t}(x) - A_{t}(x)|^{2} \, dx \, dt = \beta. \end{displaymath} 
Similarly, choose constants $c_{x} \in \R$, $x \in I_{1}$, such that
\begin{displaymath} \fint_{I_{1}} \fint_{I_{2}} |f_{x}(t) - c_{x}|^{2} \, dt \, dx = \beta. \end{displaymath}
Now, there is only one reasonable way to define the affine map $A \colon \R^{n - 1} \to \R$, namely
\begin{displaymath} A(x) := \fint_{I_{2}} A_{t}(x) \, dt. \end{displaymath}
This formula indeed produces an affine map: in fact $A(x) = a \cdot x + b$, where
\begin{displaymath} a = \fint_{I_{2}} a_{t} \, dt \in \R^{n - 1} \quad \text{and} \quad b = \fint_{I_{2}} b_{t} \, dt. \end{displaymath}
Note that if every $A_{t}$ is $L$-Lipschitz, then $|a_{t}| \leq L$, hence also $|a| \leq L$, and consequently also $A$ is $L$-Lipschitz. Recall that the mean is the best $L^{2}$ approximation: 
\begin{displaymath} \E [|X - \E[X]|^{2}] = \inf_{c \in \R} \E [|X - c|^{2}].  \end{displaymath}
Applied to the random variable $X : t \mapsto A_{t}(x)$ for a fixed $x \in I_{1}$, this gives
\begin{displaymath} \fint_{I_{2}} |A_{t}(x) - A(x)|^{2} \, dy = \fint_{I_{2}} \left| A_{t}(x) - \fint_{I_{2}} A_{t}(x) \, dt \right|^{2} \, dt \leq \fint_{I_{2}} |A_{t}(x) - c_{x}|^{2} \, dt. \end{displaymath}
Using this and \eqref{HAff}, one first finds that
\begin{align*} \fint_{Q} |f(x,t) - A(x)|^{2} \, dx \, dt & \lesssim \fint_{I_{2}} \fint_{I_{1}} |f^{t}(x) - A_{t}(x)|^{2} \, dx \, dt\\
& \quad + \fint_{I_{1}} \fint_{I_{2}} |A_{t}(x) - A(x)|^{2} \, dt \, dx\\
& \leq \beta + \fint_{I_{1}}\fint_{I_{2}} |A_{t}(x) - c_{x}|^{2} \, dt \, dx.  \end{align*} 
The second term on the right also has the correct bound:
\begin{align*} \fint_{I_{1}}\fint_{I_{2}} |A_{t}(x) - c_{x}|^{2} \, dt \, dx & \lesssim \fint_{I_{1}} \fint_{I_{2}} |f^{t}(x)  - A_{t}(x)|^{2} \, dt \, dx \\
& + \fint_{I_{2}} \fint_{I_{1}} |f_{x}(t) - c_{x}|^{2} \, dt \, dx = \beta + \beta,  \end{align*} 
recalling the choices of $A_{t}$ and $c_{x}$. The proof is complete. \end{proof}

\begin{cor} Let $\psi \colon \R^{n} \to \R$ be continuous. Then,
\begin{equation}\label{form4a} \beta_{2}(Q)^{2} \lesssim A(Q)^{2} + \osc(Q)^{2} \end{equation}
for any parabolic box $Q \subset \R^{n}$. The same inequality holds if $A(Q)$ and $\beta_{2}(Q)$ are replaced by $A^{L}(Q)$ and $\beta_{2}^{L}(Q)$. \end{cor}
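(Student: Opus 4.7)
The plan is to deduce \eqref{form4a} as a direct application of Proposition~\ref{AffApprox}, with only a bookkeeping exercise on the normalising factors that distinguish $\beta_{2}(Q)$, $A(Q)$, $\osc(Q)$ from the unnormalised quantities that appear in the proposition.

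First I would fix a parabolic box $Q = I_{1} \times I_{2}$ and apply Proposition~\ref{AffApprox} to $f := \psi$ on $Q$. This produces an affine map $A \colon \R^{n-1} \to \R$ such that
\begin{equation*}
\fint_{Q} |\psi(x,t) - A(x)|^{2}\, dx\, dt \;\lesssim\; \beta_{\mathrm{hor}} + \beta_{\mathrm{vert}},
\end{equation*}
where (renaming the duplicated symbols in the proposition for clarity)
\begin{equation*}
\beta_{\mathrm{hor}} = \fint_{I_{2}} \inf_{A_{t}} \fint_{I_{1}} |\psi^{t}(x) - A_{t}(x)|^{2}\, dx\, dt, \qquad \beta_{\mathrm{vert}} = \fint_{I_{1}} \inf_{c_{x}} \fint_{I_{2}} |\psi_{x}(t) - c_{x}|^{2}\, dt\, dx.
\end{equation*}

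Next I would perform the scaling book-keeping. Since $Q$ is a \emph{parabolic} box, $|I_{2}| = \ell(I_{1})^{2}$ and $\diam(Q) \sim \ell(I_{1}) \sim \diam(I_{1})$. Comparing with the definitions \eqref{HAffCoeff} and \eqref{VOscCoeff}, one reads off
\begin{equation*}
\beta_{\mathrm{hor}} = \diam(I_{1})^{2} \cdot A(Q)^{2} \sim \diam(Q)^{2}\, A(Q)^{2}, \qquad \beta_{\mathrm{vert}} = |I_{2}| \cdot \osc(Q)^{2} \sim \diam(Q)^{2}\, \osc(Q)^{2}.
\end{equation*}
On the other hand, since $|Q| \sim \diam(Q)^{n+1}$, the definition \eqref{AffCoeff} of $\beta_{2}(Q)$ gives
\begin{equation*}
\beta_{2}(Q)^{2} \;\leq\; \frac{1}{\diam(Q)^{2}} \fint_{Q} |\psi(x,t) - A(x)|^{2}\, dx\, dt,
\end{equation*}
and combining the three displays yields $\beta_{2}(Q)^{2} \lesssim A(Q)^{2} + \osc(Q)^{2}$, which is \eqref{form4a}.

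For the $L$-restricted version, I would rerun the same argument, but restrict the infimum defining $\beta_{\mathrm{hor}}$ to affine maps $A_{t}$ of Lipschitz constant at most $L$, so that $\beta_{\mathrm{hor}}$ becomes $\diam(Q)^{2}\, A^{L}(Q)^{2}$. By the final sentence of Proposition~\ref{AffApprox}, the map $A$ produced is then itself $L$-Lipschitz, which makes it an admissible competitor for $\beta_{2}^{L}(Q)$; the rest of the computation is unchanged. There is no real obstacle here, only the mild subtlety of matching the various $\diam$ factors between the parabolic definitions and the (dimension-free) quantities produced by the proposition.
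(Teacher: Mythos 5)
Your proposal is correct and takes essentially the same route as the paper: both simply invoke Proposition \ref{AffApprox} with $f = \psi$ and observe that, because $Q$ is a parabolic box, the factors $\diam(I_{1})^{2}$, $|I_{2}|$, and $\diam(Q)^{2}$ appearing in the normalisations of $A(Q)$, $\osc(Q)$, and $\beta_{2}(Q)$ are all comparable, so the unnormalised quantities of the proposition translate directly. Your explicit bookkeeping of these factors and your treatment of the $L$-restricted case via the final sentence of Proposition \ref{AffApprox} match the paper's argument (the only cosmetic nit is that your penultimate displayed inequality should read $\lesssim$ rather than $\leq$, since $|Q|$ and $\diam(Q)^{n+1}$ agree only up to a dimensional constant).
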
 

\begin{proof} Write $Q = I_{1} \times I_{2}$, and note that $\ell(I_{1})^{2} \sim |I_{2}| \sim \diam(Q)^{2}$. This means that the normalisation in each of the three coefficients \eqref{AffCoeff} and \eqref{HAffCoeff}-\eqref{VOscCoeff} is roughly the same and can be factored out. After this observation, \eqref{form4a} is a direct corollary of Proposition \ref{AffApprox} applied to $f = \psi$ and $Q = I_{1} \times I_{2}$. The proof of the inequality with the coefficients $A^{L}(Q)$ and $\beta_{2}^{L}(Q)$ is the same, taking into account the last part of Proposition \ref{AffApprox}. \end{proof}

A combination of the previous arguments now yields Theorem \ref{dorronsoro}

\begin{proof}[Proof of Theorem \ref{dorronsoro}] For $\psi \colon \R^{n} \to \R$ parabolic Lipschitz, and for $Q_{0} \in \calD$, estimate as follows:
\begin{displaymath} \mathop{\sum_{Q \in \calD}}_{Q \subset Q_{0}} \beta_{2}(CQ)^{2}|Q| \lesssim \mathop{\sum_{Q \in \calD}}_{Q \subset Q_{0}} A(CQ)^{2}|Q| + \mathop{\sum_{Q \in \calD}}_{Q \subset Q_{0}} \osc(CQ)^{2}|Q| \lesssim_{C} |Q_{0}|, \end{displaymath}
using \eqref{form4a} in the first inequality, and Propositions \ref{oscCarleson} and \ref{affCarleson} in the second. If $\psi$ is parabolic $L$-Lipschitz, the same argument works for the coefficients $\beta_{2}^{L}(Q)$ and $A^{L}(Q)$. The proof of Theorem \ref{dorronsoro} is complete.
 \end{proof}
 
 \subsection{Rademacher's theorem} Define the natural $L^{\infty}$ variant of the coefficients $\beta_{p}(Q)$ as follows: if $\psi \colon \R^{n} \to \R$ is continuous, set
 \begin{displaymath} \beta_{\infty}(Q) := \inf_{A} \sup_{(x,t) \in Q} \frac{|\psi(x,t) - A(x)|}{\diam(Q)}, \end{displaymath}
 where $A$ again ranges over affine maps $A \colon \R^{n - 1} \to \R$. As before, define $\beta_{\infty}^{L}(Q)$, where the affine maps are restricted to being $L$-Lipschitz. In this section, I recall that parabolic Lipschitz functions are Lipschitz-continuous in the metric $d$ (as noted in \eqref{lipschitz}). Consequently, if $\psi$ is parabolic Lipschitz, one has 
 \begin{equation}\label{betaLeq1} \beta^{L}_{\infty}(Q) \lesssim 1, \qquad Q \subset \R^{n}, \: L \geq 1. \end{equation}
 
\begin{lemma}\label{betaInftyVsBeta2} Let $L \geq 1$, and let $\psi \colon \R^{n} \to \R$ be $L$-Lipschitz in the metric $d$. Then,
\begin{displaymath} \beta^{L}_{\infty}(Q) \lesssim_{L} \beta^{L}_{2}(2Q)^{2/(n + 3)} \end{displaymath} 
for all parabolic boxes $Q \subset \R^{n}$.
\end{lemma}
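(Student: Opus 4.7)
The aim is to extract a pointwise bound from an $L^2$ average, losing a power determined by the parabolic dimension $n+1$ of the ambient space. Select an $L$-Lipschitz affine map $A$ that quasi-minimises $\beta_2^L(2Q)$, and set
\[ \eta \;:=\; \sup_{(x,t) \in Q} \frac{|\psi(x,t) - A(x)|}{\diam(Q)}. \]
Since $A$ is $L$-Lipschitz in $x$, it is admissible for $\beta_\infty^L(Q)$, so $\beta_\infty^L(Q) \leq \eta$, and it suffices to show $\eta \lesssim_L \beta_2^L(2Q)^{2/(n+3)}$.

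Pick $(x_0,t_0) \in \overline{Q}$ attaining this supremum. Using that $\psi$ is $L$-Lipschitz with respect to $d$ by \eqref{lipschitz}, and that $A$ is $L$-Lipschitz in the horizontal variable, the triangle inequality gives
\[ |\psi(x,t) - A(x)| \;\geq\; \eta\,\diam(Q) \,-\, 2L\,d((x,t),(x_0,t_0)), \qquad (x,t) \in \R^n. \]
With $r := \min\{\eta\diam(Q)/(4L),\,\diam(Q)/4\}$, the right-hand side is $\geq \eta\diam(Q)/2$ throughout the parabolic ball $B := \{(x,t) : d((x,t),(x_0,t_0)) \leq r\}$. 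Moreover $(x_0,t_0) \in Q$ combined with $r \leq \diam(Q)/4$ guarantees $B \subset 2Q$, because the $d$-distance from $Q$ to the complement of $2Q$ is $\sim \diam(Q)$ both in the horizontal and (after applying $|\cdot|^{1/2}$) vertical directions.

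The key quantitative input is a volume bound: a $d$-ball of radius $r$ in $\R^{n-1}\times\R$ has Lebesgue measure $\sim r^{n-1}\cdot r^{2} = r^{n+1}$, since its vertical extent is $r^{2}$ while its horizontal extent is $r$. Combined with the quasi-minimality of $A$, this yields
\[ \beta_2^L(2Q)^2\,\diam(Q)^{n+3} \;\gtrsim\; \int_B |\psi - A|^2 \;\gtrsim\; \eta^2\,\diam(Q)^2\,r^{n+1}. \]
When $\eta \leq L$, the choice $r = \eta\diam(Q)/(4L)$ rearranges to $\eta^{n+3} \lesssim_L \beta_2^L(2Q)^2$, which is the desired bound. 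When $\eta > L$, the choice $r = \diam(Q)/4$ yields the stronger estimate $\eta \lesssim \beta_2^L(2Q)$; to merge the cases I would split on whether $\beta_2^L(2Q) < 1$ (so $\beta_2^L(2Q) \leq \beta_2^L(2Q)^{2/(n+3)}$ since $2/(n+3) \leq 1$) or $\beta_2^L(2Q) \geq 1$ (in which case \eqref{betaLeq1} already gives $\beta_\infty^L(Q) \lesssim 1 \leq \beta_2^L(2Q)^{2/(n+3)}$ for free). The main point of attention is verifying that the parabolic ball geometrically fits inside $2Q$ and computing its $(n+1)$-dimensional volume correctly; the rest is bookkeeping of $L$-dependent constants via the triangle inequality.
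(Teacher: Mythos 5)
Your argument is correct and is essentially the paper's proof: both find a parabolic ball $B \subset 2Q$ of $d$-radius comparable (up to $L$) to the sup-error on which $|\psi - A|$ stays bounded below, compute $\mathcal{L}^n(B) \sim r^{n+1}$, and play that against the $L^2$ bound from $\beta_2^L(2Q)$. The paper phrases it as a contradiction argument with a large constant $C$ (after first discharging the case $\beta \sim_L 1$ via \eqref{betaLeq1}), whereas you compute directly with $\eta$ and split on $\eta \lessgtr L$; the small constant $1/4$ in your definition of $r$ should really be $c_n$ to guarantee $B \subset 2Q$ when $n$ is large, but that is cosmetic and matches the paper's own level of precision.
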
 

\begin{proof} This argument is \cite[(5.4)]{DS1} translated to the parabolic setting. Let $A \colon \R^{n - 1} \to \R$ be an affine $L$-Lipschitz function almost minimising $\beta := \beta^{L}_{2}(2Q)$:
\begin{displaymath} \fint_{2Q} \left( \frac{|\psi(y,s) - A(y)|}{\diam(2Q)} \right)^{2} \, dy \, ds \leq 2\beta^{2}. \end{displaymath}
By \eqref{betaLeq1} the claim of the lemma is clear if $\beta \sim_{L} 1$, so one may assume that $\beta$ is small. Now, consider a point $p = (x,t) \in Q$, and assume to reach a contradiction that
\begin{displaymath} |\psi(x,t) - A(x)| \geq 2C\beta^{2/(n + 3)}\diam(Q) \end{displaymath}
for some large constant $C \geq 1$. Since both $\psi$ and $A$ are $L$-Lipschitz, one infers that there exists a $d$-ball $B \subset \R^{n}$ with $\diam(B) \sim_{L} \beta^{2/(n + 3)}\diam(Q)$ such that
\begin{displaymath} |\psi(y,s) - A(y)| \geq C\beta^{2/(n + 3)}\diam(Q), \qquad (y,s) \in B. \end{displaymath} 
Assuming that $\beta$ is sufficiently small, and recalling that $p \in Q$, one has $B \subset 2Q$. It follows that

\begin{align*} \beta^{2(n + 1)/(n + 3)}\diam(Q)^{n + 1} \sim_{L} \mathcal{L}^{n}(B) & \lesssim \frac{1}{C^{2}\beta_{2}^{4/(n + 3)}} \int_{B} \left( \frac{|\psi(y,s) - A(y)|}{\diam(Q)} \right)^{2} \, dy \, ds\\
& \lesssim \frac{\diam(Q)^{n + 1}}{C^{2}\beta^{4/(n + 3)}} \fint_{2Q} \left( \frac{|\psi(y,s) - A(y)|}{\diam(2Q)} \right)^{2} \, dy \, ds\\
& \lesssim \frac{\beta^{2}\diam(Q)^{n + 1}}{C^{2}\beta^{4/(n + 3)}}. \end{align*} 
Rearranging terms leads to $1 \lesssim_{L} 1/C^{2}$, which gives the desired contradiction for $C \geq 1$ large enough.  \end{proof}

 \begin{cor}\label{betaInftyCarleson} If $\psi \colon \R^{n} \to \R$ is parabolic Lipschitz, then, for any $C \geq 1$,  
  \begin{displaymath} \mathop{\sum_{Q \in \calD}}_{Q \subset Q_{0}} \beta_{\infty}(CQ)^{n + 3}|Q| \lesssim_{C} |Q_{0}|, \qquad Q_{0} \in \calD. \end{displaymath} 
 \end{cor}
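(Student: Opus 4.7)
The plan is to combine Lemma \ref{betaInftyVsBeta2} with the parabolic Dorronsoro estimate (Theorem \ref{dorronsoro}) in its $L$-restricted form, in a completely direct way; there is no real obstacle, only bookkeeping.

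First I would note that $\psi$ being parabolic $L$-Lipschitz implies, via \eqref{lipschitz}, that $\psi$ is Lipschitz in the metric $d$ with a constant $\sim L$, and in particular the hypothesis of Lemma \ref{betaInftyVsBeta2} is satisfied (for this $L$, up to an absolute factor I will absorb into the $\lesssim_{L}$ notation). I would also remark that the unrestricted infimum coefficient is dominated by the restricted one, since restricting the class of competing affine maps can only increase the infimum:
\begin{equation*}
\beta_{\infty}(CQ) \;\leq\; \beta_{\infty}^{L}(CQ)
\end{equation*}
for every cube $Q$.

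Next I would apply Lemma \ref{betaInftyVsBeta2} to the parabolic box $CQ$, obtaining
\begin{equation*}
\beta_{\infty}^{L}(CQ) \;\lesssim_{L}\; \beta_{2}^{L}(2CQ)^{2/(n+3)},
\end{equation*}
and raise this to the power $n+3$ to kill the awkward exponent:
\begin{equation*}
\beta_{\infty}(CQ)^{n+3} \;\leq\; \beta_{\infty}^{L}(CQ)^{n+3} \;\lesssim_{L}\; \beta_{2}^{L}(2CQ)^{2}.
\end{equation*}

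Finally, I would sum this pointwise (in $Q$) bound over all $Q \in \calD$ with $Q \subset Q_{0}$, weighted by $|Q|$, and invoke the $L$-restricted version of Theorem \ref{dorronsoro} with constant $2C$ in place of $C$:
\begin{equation*}
\mathop{\sum_{Q \in \calD}}_{Q \subset Q_{0}} \beta_{\infty}(CQ)^{n+3} |Q| \;\lesssim_{L}\; \mathop{\sum_{Q \in \calD}}_{Q \subset Q_{0}} \beta_{2}^{L}(2CQ)^{2} |Q| \;\lesssim_{C,L}\; |Q_{0}|.
\end{equation*}
Since $L$ is fixed by the parabolic Lipschitz hypothesis on $\psi$, this is exactly the asserted Carleson-type bound, with the implicit constant depending only on $C$ (and on $\psi$ through $L$). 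The only mild subtlety is to make sure Theorem \ref{dorronsoro} is quoted in its $L$-restricted form, which is precisely what the final sentence of its statement provides; no further work is required.
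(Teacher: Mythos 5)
Your proof is correct and follows essentially the same route as the paper: apply Lemma \ref{betaInftyVsBeta2} to pass from $\beta_{\infty}^{L}(CQ)^{n+3}$ to $\beta_{2}^{L}(2CQ)^{2}$, sum using the $L$-restricted form of Theorem \ref{dorronsoro}, and conclude via $\beta_{\infty}(CQ) \leq \beta_{\infty}^{L}(CQ)$. No differences worth noting.
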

 
 \begin{proof} Let $L \geq 1$ be a constant such that $\psi$ is parabolic $L$-Lipschitz. Then, combining Theorem \ref{dorronsoro} and Proposition \ref{betaInftyVsBeta2} gives
 \begin{displaymath} \mathop{\sum_{Q \in \calD}}_{Q \subset Q_{0}} \beta_{\infty}^{L}(CQ)^{n + 3}|Q| \lesssim_{L} \mathop{\sum_{Q \in \calD}}_{Q \subset Q_{0}} \beta_{2}^{L}(2CQ)^{2}|Q| \lesssim_{C} |Q_{0}|, \qquad Q_{0} \in \calD, \end{displaymath} 
 and the claim now follows from $\beta_{\infty}(CQ) \leq \beta_{\infty}^{L}(CQ)$.  \end{proof}
 
Proposition \ref{betaInftyCarleson} implies that if $\psi \colon \R^{n} \to \R$ is parabolic Lipschitz, then
\begin{displaymath} \mathop{\sum_{p \in Q \in \calD}}_{\diam(Q) \leq 1} \beta_{\infty}(2Q)^{n + 3} < \infty \end{displaymath}
for Lebesgue almost every $p \in \R^{n}$. For these $p \in \R^{n}$, one has $\beta_{\infty}(2Q) \to 0$ as $Q \to p$. Theorem \ref{rademacher} is an easy consequence of this fact, combined with Rademacher's theorem for Euclidean Lipschitz functions. Recall the statement of Theorem \ref{rademacher}:
 \begin{thm} Let $\psi \colon \R^{n} \to \R$ be parabolic Lipschitz. Then, for Lebesgue almost every $p = (x,t) \in \R^{n - 1} \times \R$, there exists a linear map $A_{p} \colon \R^{n - 1} \to \R$ such that
 \begin{equation}\label{differentiability} \frac{|\psi(q) - \psi(p) - A_{p}(y - x)|}{d(p,q)} = \epsilon_{p}(d(p,q)), \end{equation}
 where $\epsilon_{p}(r) \to 0$ as $r \to 0$.
 \end{thm}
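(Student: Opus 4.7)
The plan is to combine two pieces of almost-everywhere information: horizontal differentiability, from slicewise application of the Euclidean Rademacher theorem, and smallness of $\beta_\infty(CQ)$ on dyadic boxes shrinking to $p$, from Corollary \ref{betaInftyCarleson}. For a generic $p$ the horizontal gradient will be the candidate $A_p$, and affine near-minimisers of $\beta_\infty(CQ)$ will serve as the bridge that transfers the horizontal information into the vertical direction.

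First I would harvest the slicewise differentiability. By \eqref{horizontalLipschitz} the function $\psi^t(y) := \psi(y,t)$ is $L$-Lipschitz for every fixed $t \in \R$, so the Euclidean Rademacher theorem supplies a full-measure set $F_t \subset \R^{n-1}$ on which $\psi^t$ is differentiable; denote its gradient at $y$ by $A_{(y,t)}$. The set $F := \{(y,t) : y \in F_t\}$ is Borel (horizontal differentiability at $(y,t)$ is a countable combination, over $a \in \Q^{n-1}$, $r \in \Q_+$, $\epsilon \in \Q_+$, of conditions on the continuous function $\psi$), and Fubini therefore gives $|\R^n \setminus F| = 0$. Intersecting $F$ with the full-measure set where $\beta_\infty(CQ) \to 0$ as $Q \ni p$ shrinks to $p$ (from Corollary \ref{betaInftyCarleson}, applied with a fixed large $C$) produces a full-measure set of ``good'' points $p$.

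For such a good $p = (x,t)$, given $q = (y,s)$ with small $r := d(p,q)$, I would choose a dyadic box $Q \ni p$ with $\diam(Q) \sim r$; then $q \in CQ$ once $C$ is a constant depending only on $n$. Let $A_Q(z) = a_Q \cdot z + b_Q$ be an almost-minimiser of $\beta_\infty(CQ)$, so $\|\psi - A_Q\|_{L^\infty(CQ)} \le 2\beta_\infty(CQ)\diam(CQ)$. Evaluating this inequality on the horizontal slice $CQ \cap \{t' = t\}$ at $x$ and at $x + c_0 \diam(Q) e_i$ (with $c_0$ small and $\{e_i\}$ the coordinate basis), and subtracting the horizontal differentiability identity $\psi(x+h,t) - \psi(x,t) = A_p \cdot h + o(|h|)$, yields
\begin{displaymath}
|A_Q(x) - \psi(x,t)| \lesssim \beta_\infty(CQ)\diam(CQ) \quad \text{and} \quad |a_Q - A_p| \lesssim \beta_\infty(CQ) + o(1),
\end{displaymath}
both quantities tending to $0$ as $r \to 0$. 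I expect this gradient-matching step to be the main technical point, as it requires separating the constant and linear parts of $A_Q$ out of a single $L^\infty$ bound confined to one slice.

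Finally, I would expand
\begin{displaymath}
|\psi(q) - \psi(p) - A_p(y-x)| \le |\psi(q) - A_Q(y)| + |a_Q - A_p|\,|y-x| + |A_Q(x) - \psi(x,t)|,
\end{displaymath}
and bound each summand by $o(r)$: the first by $2\beta_\infty(CQ)\diam(CQ) = o(r)$, the second by $o(1) \cdot r$ since $|y - x| \le r$ and $a_Q \to A_p$, and the third by the first display above. This establishes \eqref{differentiability} at every good $p$ and finishes the proof.
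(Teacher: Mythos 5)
Your proof is correct and follows essentially the same strategy as the paper's: apply the Euclidean Rademacher theorem slicewise (with Fubini) to get horizontal differentiability, invoke Corollary~\ref{betaInftyCarleson} to get $\beta_{\infty}(CQ) \to 0$ at a.e.\ $p$, and use the affine near-minimiser on a comparably-sized dyadic box to bridge the two. The one cosmetic difference is that you extract the gradient $a_{Q}$ of the near-minimiser and bound $|a_{Q} - A_{p}|$ coordinate by coordinate, whereas the paper works directly with the quantity $|A_{Q}(y - x) - A_{p}(y - x)|$ for $y$ in the horizontal base of $2Q$ (obtained by subtracting the $\beta_{\infty}$-estimate from the horizontal differentiability estimate), which sidesteps the coordinate decomposition but is otherwise the same computation.
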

 
 \begin{proof} By the Euclidean Rademacher theorem (and Fubini's theorem), for Lebesgue almost every $p = (x,t) \in \R^{n}$, there exists a linear map $A_{p} \colon \R^{n - 1} \to \R$ such that
 \begin{equation}\label{form5a} \frac{|\psi(y,t) - \psi(p) - A_{p}(y - x)|}{|x - y|} = \tilde{\epsilon}_{p}(|y - x|), \end{equation} 
where $\tilde{\epsilon}_{p}(r) \to 0$ as $r \to 0$. It remains to show that \eqref{differentiability} holds with the linear map $A_{p}$ almost everywhere. According to Corollary \ref{betaInftyCarleson}, one has
\begin{equation}\label{form6a} \lim_{Q \to p} \beta_{\infty}(2Q) = 0 \end{equation} 
for Lebesgue almost every $p \in \R^{n}$. Fix a point $p \in \R^{n}$ such that \eqref{form5a}-\eqref{form6a} hold: the claim is that also \eqref{differentiability} holds. 

One first needs to relate the best approximating affine maps from the definition of $\beta_{\infty}(2Q)$ to the fixed map $A_{p}$, see \eqref{form8a} below for where this is heading. For $Q \in \calD$ fixed with $p \in Q$, let $A^{Q} =  A_{Q} + b \colon \R^{n - 1} \to \R$ be an affine map almost minimising $\beta_{\infty}(2Q)$, where $A_{Q}$ is linear, $b \in \R$, and
\begin{displaymath} \sup_{(y,s) \in 2Q} \frac{|\psi(y,s) - A^{Q}(y)|}{\diam(2Q)} \leq 2\beta_{\infty}(2Q). \end{displaymath}
Since also $p \in Q \subset 2Q$, one has
\begin{equation}\label{form9a} |\psi(y,t) - \psi(p) - A_{Q}(y - x)| \lesssim \beta_{\infty}(2Q)\diam(Q), \qquad (y,t) \in 2Q. \end{equation} 
Combining this with \eqref{form5a},
\begin{equation}\label{form8a} |A_{Q}(y - x) - A_{p}(y - x)| \lesssim \delta(Q)\diam(Q), \qquad y \in 2I_{1}, \end{equation}
with $\delta(Q) := \beta_{\infty}(2Q) + \tilde{\epsilon}(\diam(2Q))$, and where $2I_{1}$ stands for the base of the cube $2Q = 2I_{1} \times 2I_{2} \subset \R^{n - 1} \times \R$.

Now, to complete the proof of \eqref{differentiability}, note that for any $q = (y,s) \neq p$, there exists a cube $Q \in \calD$ with 
\begin{equation}\label{form11a} p \in Q, \quad q \in 2Q, \quad \text{and} \quad \diam(Q) \sim d(p,q). \end{equation}
Define
\begin{displaymath} \epsilon_{p}(r) := \sup \{\delta(Q) : p \in Q \in \calD \text{ and } \diam(Q) \sim r\}, \end{displaymath} 
where the implicit constant is the same as in \eqref{form11a}. Since $\delta(Q) \to 0$ as $Q \to p$, also $\epsilon_{p}(r) \to 0$ as $r \to 0$. Finally, for $q = (y,s) \neq p$, fix $Q \in \calD$ as in \eqref{form11a}, and estimate as follows, using \eqref{form9a}-\eqref{form8a}:
\begin{align*} |\psi(q) - \psi(p) - A_{p}(y - x)| & \leq |\psi(q) - \psi(p) - A_{Q}(y - x)| + |A_{Q}(y - x) - A_{p}(y - x)|\\
& \lesssim [\beta_{\infty}(2Q) + \delta(Q)]d(p,q) \lesssim \epsilon_{p}(d(p,q))d(p,q). \end{align*} 
This completes the proof of \eqref{differentiability}. \end{proof}

\bibliographystyle{plain}
\bibliography{references2}

\def\cprime{$'$}
\begin{thebibliography}{10}

\bibitem{MR3512428}
J.~Azzam.
\newblock Bi-{L}ipschitz parts of quasisymmetric mappings.
\newblock {\em Rev. Mat. Iberoam.}, 32(2):589--648, 2016.

\bibitem{MR3770170}
Jonas Azzam and Raanan Schul.
\newblock An analyst's traveling salesman theorem for sets of dimension larger
  than one.
\newblock {\em Math. Ann.}, 370(3-4):1389--1476, 2018.

\bibitem{MR3616046}
Christopher~J. Bishop and Yuval Peres.
\newblock {\em Fractals in probability and analysis}, volume 162 of {\em
  Cambridge Studies in Advanced Mathematics}.
\newblock Cambridge University Press, Cambridge, 2017.

\bibitem{MR1104656}
Michael Christ.
\newblock {\em Lectures on singular integral operators}, volume~77 of {\em CBMS
  Regional Conference Series in Mathematics}.
\newblock Published for the Conference Board of the Mathematical Sciences,
  Washington, DC; by the American Mathematical Society, Providence, RI, 1990.

\bibitem{DS1}
G.~David and S.~Semmes.
\newblock Singular integrals and rectifiable sets in {${\bf R}^n$}: Au-del\`a
  des graphes lipschitziens.
\newblock {\em Ast\'erisque}, (193):152, 1991.

\bibitem{2018arXiv180507270D}
M.~{Dindo{\v s}}, L.~{Dyer}, and S.~{Hwang}.
\newblock {Parabolic $L^p$ Dirichlet Boundary Value Problem and VMO-type
  time-varying domains}.
\newblock {\em ArXiv e-prints}, May 2018.

\bibitem{MR796440}
Jos{\'e}~R. Dorronsoro.
\newblock A characterization of potential spaces.
\newblock {\em Proc. Amer. Math. Soc.}, 95(1):21--31, 1985.

\bibitem{FR}
E.~B. Fabes and N.~M. Rivi\`ere.
\newblock Symbolic calculus of kernels with mixed homogeneity.
\newblock In {\em Singular {I}ntegrals ({P}roc. {S}ympos. {P}ure {M}ath.,
  {C}hicago, {I}ll., 1966)}, pages 106--127. Amer. Math. Soc., Providence,
  R.I., 1967.

\bibitem{MR2450237}
John~B. Garnett and Donald~E. Marshall.
\newblock {\em Harmonic measure}, volume~2 of {\em New Mathematical
  Monographs}.
\newblock Cambridge University Press, Cambridge, 2008.
\newblock Reprint of the 2005 original.

\bibitem{MR1330241}
Steve Hofmann.
\newblock A characterization of commutators of parabolic singular integrals.
\newblock In {\em Fourier analysis and partial differential equations
  ({M}iraflores de la {S}ierra, 1992)}, Stud. Adv. Math., pages 195--210. CRC,
  Boca Raton, FL, 1995.

\bibitem{MR1484857}
Steve Hofmann.
\newblock Parabolic singular integrals of {C}alder\'{o}n-type, rough operators,
  and caloric layer potentials.
\newblock {\em Duke Math. J.}, 90(2):209--259, 1997.

\bibitem{HL}
Steve Hofmann and John~L. Lewis.
\newblock {$L^2$} solvability and representation by caloric layer potentials in
  time-varying domains.
\newblock {\em Ann. of Math. (2)}, 144(2):349--420, 1996.

\bibitem{MR1727344}
Steve Hofmann, Xinwei Li, and Dachun Yang.
\newblock A generalized characterization of commutators of parabolic singular
  integrals.
\newblock {\em Canad. Math. Bull.}, 42(4):463--477, 1999.

\bibitem{HN}
Tuomas Hyt\"onen and Assaf Naor.
\newblock Quantitative affine approximation for umd targets.
\newblock {\em to appear in J. Eur. Math. Soc.}, 2019.

\bibitem{MR1013815}
Peter~W. Jones.
\newblock Square functions, {C}auchy integrals, analytic capacity, and harmonic
  measure.
\newblock In {\em Harmonic analysis and partial differential equations ({E}l
  {E}scorial, 1987)}, volume 1384 of {\em Lecture Notes in Math.}, pages
  24--68. Springer, Berlin, 1989.

\bibitem{MR1069238}
Peter~W. Jones.
\newblock Rectifiable sets and the traveling salesman problem.
\newblock {\em Invent. Math.}, 102(1):1--15, 1990.

\bibitem{MR1182488}
Kate Okikiolu.
\newblock Characterization of subsets of rectifiable curves in {${\bf R}^n$}.
\newblock {\em J. London Math. Soc. (2)}, 46(2):336--348, 1992.

\bibitem{MR578205}
Robert~S. Strichartz.
\newblock Bounded mean oscillation and {S}obolev spaces.
\newblock {\em Indiana Univ. Math. J.}, 29(4):539--558, 1980.

\bibitem{MR2481953}
Xavier Tolsa.
\newblock Uniform rectifiability, {C}alder\'{o}n-{Z}ygmund operators with odd
  kernel, and quasiorthogonality.
\newblock {\em Proc. Lond. Math. Soc. (3)}, 98(2):393--426, 2009.

\end{thebibliography}

\end{document}